\newcommand{\harxiv}[1]{\href{http://arxiv.org/abs/#1}{\texttt{arXiv:#1}}}
\newcommand{\hyref}[2]{\hyperref[#2]{#1~\ref*{#2}}}
\newcommand{\coloneqq}{\mathrel{\mathop:}=}
\newcommand{\sleq}{\mathrel{(\leq)}}
\newcommand{\orthstack}[1]{\mathrel{ \substack{ \resizebox{!}{1.2ex}{$\bot$} \\[-0.2ex]
                                           \resizebox{!}{0.8ex}{$#1$} }}}
\newcommand{\ndorth}{ \orthstack{-\Delta} }
\newcommand{\dorth}{ \orthstack{\Delta} }
\theoremstyle{plain}
\newtheorem{theorem}{Theorem}[section]
\newtheorem{lemma}[theorem]{Lemma}
\newtheorem{corollary}[theorem]{Corollary}
\newtheorem{proposition}[theorem]{Proposition}
\newtheorem{introtheorem}{Theorem}
\newtheorem{introproposition}[introtheorem]{Proposition}
\theoremstyle{definition}
\newtheorem{remark}[theorem]{Remark}
\newtheorem*{naive-algorithm}{Na\"ive algorithm}
\newtheorem*{refined-algorithm}{Refined algorithm}
\newtheorem*{definition}{Definition}
\newcommand{\IQ}{{\mathbb{Q}}}
\newcommand{\IN}{{\mathbb{N}}}
\newcommand{\ko}{{\mathcal O}}
\newcommand{\kp}{{\mathcal P}}
\newcommand{\rank}{\mathrm{rk}}
\newcommand{\dual}{^\vee}
\newcommand{\shat}[1]{\smash{\hat{#1}}}
\DeclareMathOperator{\Pic}{\mathsf{Pic}}
\DeclareMathOperator{\Coh}{\mathsf{Coh}}
\DeclareMathOperator{\Db}{\mathsf{D}^b}
\newcommand{\FM}{\mathsf{FM}}
\newcommand{\Fp}{\mathsf{F}_{\!\!+}}
\newcommand{\Fm}{\mathsf{F}_{\!\!-}}
\newcommand{\LL}{\mathbf{L}}
\newcommand{\RR}{\mathbf{R}}
\newcommand{\pr}{\mathsf{pr}}
\DeclareMathAlphabet{\mathpzc}{OT1}{pzc}{m}{it}
\newcommand{\cM}{\mathscr{M}}
\newcommand{\cU}{\mathscr{U}}
\newcommand{\into}{\hookrightarrow}
\newcommand{\onto}{\twoheadrightarrow}
\renewcommand{\iff}{\Longleftrightarrow}
\renewcommand{\phi}{\varphi}
\renewcommand{\epsilon}{\varepsilon}
\newcommand{\bib}[6]{{\bibitem{#2} #3: {\emph{#4},} #5#6.}}
\newcommand{\bibno}[1]{}
\begin{document}

\title{Stability of Picard sheaves \\ for vector bundles on curves}

\author{Georg Hein}
\author{David Ploog}

\begin{abstract}
We show that for any stable sheaf $E$ of slope $\mu(E) > 2g-1$ on a smooth, projective curve of genus $g$,
the associated Picard sheaf $\shat{E}$ on the Picard variety of the curve is stable. 
We introduce a homological tool for testing semistability of Picard sheaves.
\end{abstract}

\begingroup
  \renewcommand\thefootnote{}\footnote{MSC 2010: 14H60; 14F05}  
  \addtocounter{footnote}{-1}%
\endgroup

\maketitle


\addtocontents{toc}{\protect{\setcounter{tocdepth}{-1}}}  

\section*{Introduction}
\addtocontents{toc}{\protect{\setcounter{tocdepth}{1}}}   

\noindent
Throughout, $X$ is a smooth, projective genus $g$ curve over an algebraically closed field $k$.
Let $\Pic \coloneqq \Pic^0(X)$ be the Picard variety of $X$ and $\kp$ the Poincar\'e line bundle on $X\times \Pic$.

For a vector bundle $E\in\Coh(X)$, its \emph{Picard complex} is the Fourier--Mukai (or integral) transform
$\shat{E} \coloneqq \FM_\kp(E)$, an object of $\Db(\Pic)$.
We denote its two cohomology sheaves by $\shat{E}^0$ and $\shat{E}^1$ and call these the \emph{Picard sheaves} of $E$.
Our goal is to show that $\shat{E}$ is (semi)stable on $\Pic$ for general, (semi)stable
bundles $E$ on $X$ for certain slopes. In fact, we prove this by showing that $\shat{E}$ is semistable when
restricted to curves $i\colon X \into \Pic$.
We have the following result; see Corollaries \ref{cor:res1}, \ref{cor:STAB1}, \ref{cor:res2}, and \ref{cor:STAB2}:

\begin{introtheorem} \label{thm:intro1}
If $E$ is a stable bundle on $X$ of slope $\mu(E) > 2g-1$, then the Picard sheaf $\shat{E}^0$ is stable on $\Pic$.
Dually, if $E$ is stable of slope $\mu(E) < -1$, then the Picard sheaf $\shat{E}^1$ is stable.
The analogous statements hold for semistability, using the non-strict inequalities.
\end{introtheorem}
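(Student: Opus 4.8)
The plan is to establish the first assertion in full---stability of $\shat{E}^0$ when $\mu(E)>2g-1$---and to obtain the dual assertion and the semistable variants formally at the end. First I would record the vanishing that makes $\shat{E}$ a sheaf: for $E$ semistable with $\mu(E)>2g-2$ and any $L\in\Pic$, Serre duality gives $H^1(X,E\otimes L)\cong H^0(X,E^\vee\otimes L^\vee\otimes K_X)^\vee$, and $E^\vee\otimes L^\vee\otimes K_X$ is semistable of negative slope, hence has no sections. Thus $\shat{E}=\FM_\kp(E)$ is $\mathrm{WIT}_0$, so $\shat{E}=\shat{E}^0$, and cohomology-and-base-change (the fibrewise $h^0$ being constant) shows $\shat{E}^0$ is locally free. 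To test its $\theta$-(semi)stability on the abelian variety $\Pic$ I would restrict along the Abel--Jacobi embeddings $i_t\colon X\into\Pic$, $x\mapsto t\otimes\ko_X(x-y_0)$, for $t\in\Pic$. Since the image curve has numerical class $\theta^{g-1}/(g-1)!$ and translations act trivially on cohomology, $\deg(i_t^*F)=c_1(F)\cdot\theta^{g-1}/(g-1)!$ equals the $\theta$-degree of $F$ for every $t$; hence a $\theta$-destabilizing subsheaf of $\shat{E}^0$ would, for general $t$, restrict injectively to a destabilizing subsheaf of $i_t^*\shat{E}^0$ of the same slope. It therefore suffices to prove $i_t^*\shat{E}^0$ is (semi)stable for general $t$.

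Next I would identify this restriction. Writing $E_t\coloneqq E\otimes t$ and using $(1\times i_t)^*\kp\cong\ko_{X\times X}(\Delta-\{y_0\}\times X)\otimes\pi_2^*M$, base change turns $i_t^*\shat{E}^0$ into $R\pi_{2*}\bigl(\pi_1^*(E_t(-y_0))\otimes\ko(\Delta)\bigr)\otimes M$. Pushing the diagonal sequence $0\too\pi_1^*(E_t(-y_0))\too\pi_1^*(E_t(-y_0))(\Delta)\too (E_t(-y_0)\otimes K_X^{-1})|_\Delta\too 0$ down by $\pi_2$ (the vanishing above killing all but $H^0$ of the first term) exhibits $V_t\coloneqq i_t^*\shat{E}^0\otimes M^{-1}$ as an extension
\[ 0 \too H^0(X,E_t)\otimes\ko_X \too V_t \too Q_t \too 0, \qquad Q_t \coloneqq E_t(-y_0)\otimes K_X^{-1}, \]
whose class is the canonical (identity) element of $\End(\Ext^1(Q_t,\ko_X))$; that is, $V_t$ is the universal extension of $Q_t$ by $\Ext^1(Q_t,\ko_X)^\vee\otimes\ko_X$. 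Here $Q_t$ is stable of slope $\mu(Q_t)=\mu(E)-(2g-1)$, which is $>0$ exactly under our hypothesis---this is where the bound $2g-1$ enters.

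The heart of the matter, and the step I expect to be the main obstacle, is the resulting purely curve-theoretic statement, which should be the promised homological tool: the universal extension $V$ of a semistable $Q$ with $\mu(Q)\ge 0$ by $\Ext^1(Q,\ko_X)^\vee\otimes\ko_X$ is semistable, and is stable once $Q$ is stable. To prove it I would take a saturated $V'\subseteq V$ with image $\bar{V'}\subseteq Q$ and kernel $V'_0=V'\cap(\Ext^1(Q,\ko_X)^\vee\otimes\ko_X)$, and extract from universality the key inequality $\rank V'_0\ge h^0(X,\bar{V'}\otimes K_X)$: pulling the extension back along $\bar{V'}\into Q$ and dividing by $V'_0$ splits it (the image of $V'$ provides a section), so the restricted class dies modulo $V'_0$, which forces the restriction map $\Ext^1(Q,\ko_X)\too\Ext^1(\bar{V'},\ko_X)$---Serre-dual to the inclusion $H^0(\bar{V'}\otimes K_X)\into H^0(Q\otimes K_X)$---to have rank at most $\rank V'_0$. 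Combining $\rank V'_0\ge h^0(\bar{V'}\otimes K_X)\ge\chi(\bar{V'}\otimes K_X)=\deg\bar{V'}+\rank\bar{V'}\,(g-1)$ with $\deg V'_0\le 0$ and the stability estimate $\mu(\bar{V'})\le\mu(Q)$, a direct slope computation (using $\chi(Q\otimes K_X)=\deg Q+\rank Q\,(g-1)$) yields $\mu(V')\le\mu(V)$, with equality only when $V'=V$; the case $\bar{V'}=0$ is immediate since then $\mu(V')\le 0\le\mu(V)$.

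Finally I would assemble the pieces. The lemma applied to the stable $Q_t$ shows each $V_t$, hence $i_t^*\shat{E}^0$, is stable, and the slope-matching of the first paragraph propagates this to $\theta$-stability of $\shat{E}^0$ on $\Pic$; strictness is inherited because a strictly destabilizing subsheaf on $\Pic$ would restrict to one on a general fibre. For the dual statement, if $\mu(E)<-1$ then $H^0(E\otimes L)=0$ for all $L$, so $\shat{E}=\shat{E}^1[-1]$ is a bundle, and since $\mu(E^\vee\otimes K_X)=2g-2-\mu(E)>2g-1$ with $E^\vee\otimes K_X$ stable, I would apply the first part to $E^\vee\otimes K_X$ and transport the conclusion via Grothendieck--Serre duality for $\FM_\kp$, which interchanges $\shat{E}^1$ with the dual of $\widehat{E^\vee\otimes K_X}{}^{\,0}$ (up to the involution $(-1)^*$ on $\Pic$) and preserves stability. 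The semistable cases follow verbatim with non-strict inequalities throughout.
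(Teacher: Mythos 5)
Your route is genuinely different from the paper's: the paper restricts $\shat{E}^0$ to the curves $(-X)_M$, where it becomes the kernel-of-evaluation bundle $\Fm^0(E\otimes M)$ (the sequence $0\to\Fm^0\to H^0\otimes\ko_X\to E\otimes M\to0$), and proves its stability via a generalised Clifford theorem (\hyref{Proposition}{Clifford}, \hyref{Corollary}{cor:CT1}); you restrict to the curves of type $(X)_N$, where the restriction is the universal extension $\Fp^0(E\otimes N)$, essentially the Ein--Lazarsfeld route. The framing (restriction to curves of class $\theta^{g-1}/(g-1)!$, the identification of $i_t^*\shat{E}^0$, the duality step for $\mu(E)<-1$, the Jordan--H\"older-free treatment of semistability) is fine in outline. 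But there is a genuine gap at exactly the step you call the heart of the matter. From the splitting of the pulled-back extension modulo $V'_0$ you may conclude only that the restricted class $r\in\Ext^1(\bar{V'},W\otimes\ko_X)$, with $W=\Ext^1(Q,\ko_X)^\vee$, lifts to $\Ext^1(\bar{V'},V'_0)$. This does \emph{not} imply that $r$, viewed as a linear map $\Ext^1(Q,\ko_X)\to\Ext^1(\bar{V'},\ko_X)$, has rank at most $\rank V'_0$. That inference would be valid if $V'_0$ were a \emph{trivial} subbundle $W_0\otimes\ko_X$ with $\dim W_0=\rank V'_0$, since then the image of $\Ext^1(\bar{V'},W_0\otimes\ko_X)$ consists of tensors of rank $\leq\dim W_0$. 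But a subsheaf of a trivial bundle --- even a saturated one, which is all that saturation of $V'$ gives you --- can have linear span of dimension strictly larger than its rank: e.g.\ $\ko_X(-D)\into\ko_X^{\oplus2}$ via two linearly independent sections of $\ko_X(D)$ without common zeros, and pushing a class $\xi\in\Ext^1(\bar{V'},\ko_X(-D))$ forward along such an embedding produces the tensor $(s_1\xi,s_2\xi)$, which can have rank $2$ coming from a rank-$1$ kernel. So the key inequality $\rank V'_0\geq h^0(\bar{V'}\otimes K_X)$ is unproven, and with it the entire slope estimate $\mu(V')\leq\mu(V)$.

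The lemma itself is true and the gap is plausibly repairable: the lift only gives $\rank(r)\leq\dim W_0$, where $W_0\subseteq W$ is the span of $V'_0$, and one would then have to show that the excess $\dim W_0-\rank V'_0$ is paid for by $\deg V'_0$ becoming correspondingly negative (a spread-out embedding forces many sections of twists of $\det(V'_0)^\vee$), so that the slope computation still closes --- but this bookkeeping is precisely the missing content, and it is where the real work lies; the paper avoids it altogether by working on $(-X)_M$ and outsourcing the hard estimate to the Clifford-type bound $h^0(E')-\rank(E')<(1-g/\mu)\deg(E')$ together with the reduction of \hyref{Lemma}{lem:good1}. Two smaller points: the kernel of your extension should be $H^0(E_t(-y_0))\otimes\ko_X$ rather than $H^0(E_t)\otimes\ko_X$ (consistently with Serre duality, $\Ext^1(Q_t,\ko_X)^\vee\cong H^0(Q_t\otimes K_X)=H^0(E_t(-y_0))$), and the assertion that the class of this extension is the identity element of $\End(\Ext^1(Q_t,\ko_X))$ is stated without proof; it is standard but does require the compatibility of the diagonal sequence with the pushforward, and your stability lemma is vacuous without it (a split extension is never stable).
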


Using our concept of orthogonality, we obtain results for Picard sheaves for
generic semistable bundles of slope $\mu\in[g-2,g]$ unless $\mu=g-1$; see \hyref{Proposition}{prop:STAB3} and \hyref{Corollary}{cor:STAB4}.

\begin{introtheorem}
For $\mu \in (g-1,g]$, there exists a semistable bundle $E$ on $X$ of slope $\mu$ such that its Picard sheaf $\shat{E}^0$ is semistable. Dually, for $\mu \in [g-2,g-1)$, there exists a semistable bundle $E$ on $X$ of slope $\mu$ such that its Picard sheaf $\shat{E}^1$ is semistable.
\end{introtheorem}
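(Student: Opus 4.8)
The plan is to prove the first assertion (the one for $\shat{E}^0$ and $\mu\in(g-1,g]$) directly and then deduce the dual assertion for $\shat{E}^1$ and $\mu\in[g-2,g-1)$ formally. Serre duality on $X$ gives $H^1(X,E\otimes M)^\vee\cong H^0(X,\omega_X\otimes E\dual\otimes M\inv)$, so under $M\mapsto M\inv$ the sheaf $\shat{E}^1$ is identified with $\shat{(\omega_X\otimes E\dual)}{}^0$. Since $\omega_X\otimes E\dual$ is semistable of slope $2(g-1)-\mu$, and the reflection $\mu\mapsto 2(g-1)-\mu$ about $g-1$ carries $[g-2,g-1)$ onto $(g-1,g]$ while exchanging the two Picard sheaves, the dual statement follows from the primal one. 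Thus everything reduces to producing, for each $\mu\in(g-1,g]$, one semistable $E$ of slope $\mu$ with $\shat{E}^0$ semistable.

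Following the restriction-to-curves strategy announced in the introduction, I would test semistability on the Abel--Jacobi curve $i\colon X\into\Pic$, $y\mapsto\ko_X(y-x_0)$. By Poincar\'e's formula the class of $i(X)$ equals $\Theta^{g-1}/(g-1)!$, so the $\Theta$-degree of any coherent sheaf on $\Pic$ coincides with the ordinary degree of its restriction to $i(X)\cong X$; hence $\Theta$-slopes match and it suffices to show that $i^\ast\shat{E}^0$ is semistable after restricting to a general translate of $i(X)$, a destabilizing subsheaf restricting to a destabilizing one. The key computation is that, by flat base change along $q\colon X\times\Pic\to\Pic$ together with the residue sequence $0\to\ko\to\ko(\Delta)\to\ko_\Delta(\Delta)\to0$ on $X\times X$ (with $\ko_\Delta(\Delta)\cong T_X$), the derived restriction $Li^\ast\shat{E}$ becomes, after the twist by $\ko(x_0)$, quasi-isomorphic to the two-term complex
\[
\bigl[\,E(-x_0)\otimes T_X \xrightarrow{\ \partial\ } H^1\bigl(X,E(-x_0)\bigr)\otimes\ko_X\,\bigr]
\]
in degrees $0$ and $1$, where $\partial$ is the cup-product coboundary. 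Consequently $i^\ast\shat{E}^0=(\kernel\partial)(-x_0)$ and the $\shat{E}^1$-part is $\coker\partial$.

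Write $r=\rank(E)$ and $d=\deg(E)=r\mu$. For generic semistable $E$ of slope $\mu\in(g-1,g)$ one has $\mu(E(-x_0))=\mu-1\in(g-2,g-1)$ and $\chi(E(-x_0))=r(\mu-g)<0$, so genericity forces $H^0(E(-x_0))=0$ and $h^1=r(g-\mu)<r$. Thus $\partial$ maps the rank-$r$ semistable bundle $E(-x_0)\otimes T_X$ onto a trivial bundle of smaller rank; when $\partial$ is surjective, $\coker\partial=0$, the transform is a genuine sheaf, and $i^\ast\shat{E}^0=(\kernel\partial)(-x_0)$ has the expected rank $\chi(E)=r(\mu-g+1)$. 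The endpoint $\mu=g$ is the easiest case: there $\chi(E(-x_0))=0$, generically $\partial=0$, and $i^\ast\shat{E}^0\cong E\otimes T_X(-2x_0)$ is visibly semistable. I would read the paper's orthogonality tool as exactly the homological device that encodes the vanishing $H^0(E(-x_0))=0$ and its twisted analogues, and that converts surjectivity of $\partial$ together with semistability of $\kernel\partial$ into a checkable criterion (\hyref{Proposition}{prop:STAB3}).

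The main obstacle is precisely this last point: for suitably generic $E$ the cup-product map $\partial$ must be surjective \emph{at every point} with semistable kernel. Pointwise surjectivity is what guarantees $\kernel\partial$ is a subbundle rather than a subsheaf with torsion cokernel, and it amounts to a genuine base-point-freeness/transversality statement about the multiplication underlying $\partial$; one must also verify that genericity of $E$ indeed forces $\partial$ to be general enough. Semistability of the kernel of a general bundle surjection $E(-x_0)\otimes T_X\onto\ko_X^{\oplus h^1}$ then has to be extracted from the semistability of $E(-x_0)\otimes T_X$, which is where I expect the orthogonality criterion to do the real work. The remaining steps — controlling the locus where $\partial$ drops rank and confirming the slope bookkeeping $\mu_\Theta(\shat{E}^0)=-rg/\chi(E)$ — are routine once this transversality is established.
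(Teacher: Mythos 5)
Your setup is sound, but the proof stops exactly where the theorem lives. The reduction of the $\shat{E}^1$ statement to the $\shat{E}^0$ statement is fine in spirit --- the paper does the same via Serre duality, \hyref{Lemma}{lem:sym}(iii), but on $X\times X$, which avoids dualizing on $\Pic$; note also that your identification should carry a dual, $\shat{E}^1\cong\bigl((-1)^*\FM^0_\kp(\omega_X\otimes E\dual)\bigr)\dual$, since fibres are $H^1(E\otimes M)\cong H^0(\omega_X\otimes E\dual\otimes M\inv)\dual$. Likewise your two-term complex $\bigl[\,E(-x_0)\otimes T_X \xrightarrow{\ \partial\ } H^1(E(-x_0))\otimes\ko_X\,\bigr]$ is the correct description of the restriction to $(X)_N$, $N=\ko_X(-x_0)$, granting $H^0(E(-x_0))=0$: it is the $\Fp$-picture of \hyref{Proposition}{prop:FM-restriction} unwound through the residue sequence. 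In fact, given that vanishing, \hyref{Proposition}{prop:FM-restriction} identifies $\shat{E}^0|_{(X)_N}$ with $(\kernel\partial)\otimes\ko_X(-P_0)$ for general $N$ with no surjectivity of $\partial$ required, so your pointwise-transversality demand is not even the essential issue. The genuine gap is the last step: you never produce a single semistable $E$ of slope $\mu\in(g-1,g)$ for which $\kernel\partial$ is semistable. The kernel of a surjection from a semistable bundle onto a trivial bundle is not semistable in general, no genericity of $E$ visibly forces it, and writing that you ``expect the orthogonality criterion to do the real work'' defers precisely the content of the theorem. Only your endpoint case $\mu=g$ is complete as written (it is \hyref{Lemma}{orth-pair2} and its corollary in disguise).

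For comparison, the paper proves bare existence, at special ranks, with no transversality analysis of $\partial$ at all, by a slope-reflection trick (\hyref{Lemma}{lem:general3}, \hyref{Proposition}{prop:STAB3}): for $\mu\in(g-1,g]$ one works with slope $\mu+1\in(g,g+1]$; the involution $\mu'\mapsto g+\frac{g}{\mu'-g}$ from \hyref{Lemma}{lem:numORTH}(ii) sends $(g,g+1]$ into $[2g,\infty)$, where the Clifford-based argument of \hyref{Corollary}{cor:general1} shows $\Fm^0(E)$ is semistable and $\Fm^1(E)=0$ for stable $E$ of the reflected slope. Popa's effective version of Faltings' criterion (\hyref{Theorem}{thm:Popa}) then supplies, for each $k\geq P(r,g)$, a bundle $F$ of rank $R=kr$ and slope $\mu+1$ with $H^*(\Fm^0(E)\otimes F)=0$, i.e.\ $E\ndorth F$ by \hyref{Lemma}{lem:ortho1}; the symmetry $F\ndorth E$ of \hyref{Lemma}{lem:sym}(i) converts this, again via \hyref{Lemma}{lem:ortho1}, into $H^*\bigl((\Fm^0(F)\oplus\Fm^1(F))\otimes E\bigr)=0$, whence $\Fm^0(F)\oplus\Fm^1(F)$ and $F$ itself are semistable by the easy direction of \hyref{Theorem}{thm:Faltings}; finally one twists by $M\dual$ and restricts to $(-X)_M$ via \hyref{Proposition}{prop:FM-restriction}. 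This is why the statement asserts existence only (and the construction controls the rank via $P(r,g)$), whereas your program aims at a stronger genericity claim that neither you nor the paper establishes. To salvage your plan you would need exactly this symmetry-plus-Popa mechanism, or an equivalent, to certify semistability of $\kernel\partial$; as it stands the proposal does not prove the statement.
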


In order to show \hyref{Theorem}{thm:intro1}, we generalise Clifford's theorem about estimating global sections, from divisors to not necessarily semistable vector bundles. If $E = L_1\oplus\cdots\oplus L_r$ is a direct sum of line bundles with all $\deg(L_i) \in [0,2g-2]$, then $h^0(L_i) - 1 \leq \deg(L_i)/2$ by the classical Clifford theorem. This sums up to $h^0(E) - r \leq \deg(E)/2$. Therefore, the best generalisation one can hope for is the following result; see \hyref{Proposition}{Clifford}, where we  also give precise information about the equality case.

\begin{introproposition}
Let $E$ be a vector bundle of rank $r$ and degree $d$ on the smooth projective curve $X$ of genus $g$.
If $\mu_{\max}(E) \leq 2g-2 $ and $\mu_{\min}(E) \geq 0$, then we have   the estimate
\[ h^0(E)-r \leq \frac{d}{2} \, .\]
\end{introproposition}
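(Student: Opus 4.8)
The plan is to prove the full (Harder--Narasimhan) statement by induction on the rank $r$, the base case $r=1$ being the classical Clifford theorem for a line bundle of degree in $[0,2g-2]$. In the inductive step I would first dispose of the non-semistable case. If $E$ is not semistable, let $Q$ be its minimal Harder--Narasimhan quotient (semistable of slope $\mu_{\min}(E)\in[0,2g-2]$) and $E'$ the corresponding subbundle; both have rank $<r$, and $E'$ again satisfies $\mu_{\max}\le 2g-2$ and $\mu_{\min}\ge 0$. Since $h^0$ is subadditive on $0\to E'\to E\to Q\to 0$, the inductive hypothesis applied to $E'$ and $Q$ yields $h^0(E)\le(\rank E'+\tfrac12\deg E')+(\rank Q+\tfrac12\deg Q)=r+\tfrac d2$. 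Thus it remains to treat $E=Q$ semistable of slope $\mu\in[0,2g-2]$.

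The next idea is a Serre-duality symmetry that lets me assume $\mu\ge g-1$. Set $Q'\coloneqq K\otimes Q\dual$, again semistable of the same rank and of slope $2g-2-\mu$. Serre duality gives $h^1(Q)=h^0(Q')$, so Riemann--Roch reads $h^0(Q)-h^0(Q')=\deg Q-\rank Q\,(g-1)$; hence the bound for $Q$ is \emph{equivalent} to the bound for $Q'$. As one of the two slopes $\mu,\,2g-2-\mu$ is $\ge g-1$, I may assume $\mu\ge g-1$. If moreover $h^0(Q')=0$, then $h^0(Q)=\deg Q-\rank Q\,(g-1)$ and the estimate is immediate, so I may also assume $h^0(Q')>0$.

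Now the main step. I would choose any nonzero $\phi\in H^0(Q')=\Hom(Q,K)$, and set $M\coloneqq\operatorname{im}(\phi)\subseteq K$, a line bundle of degree $m$ with $0\le m\le 2g-2$, and $N\coloneqq\ker(\phi)$, of rank $r-1$. From $0\to N\to Q\to M\to 0$ and subadditivity, $h^0(Q)\le h^0(N)+h^0(M)$, with $h^0(M)\le\tfrac m2+1$ by the $r=1$ case. The crucial claim is that $N$ again satisfies the hypotheses: every subsheaf of $N$ is a subsheaf of the semistable $Q$, so $\mu_{\max}(N)\le\mu\le 2g-2$; and for any quotient $N\twoheadrightarrow B$ with kernel $A$, the quotient $Q/A$ of $Q$ sits in $0\to B\to Q/A\to M\to 0$, whence semistability forces $\tfrac{\deg B+m}{\rank B+1}\ge\mu$, i.e. $\deg B\ge\mu(\rank B+1)-m\ge\mu(\rank B-1)\ge0$, using $m\le 2g-2\le 2\mu$ (this is exactly where $\mu\ge g-1$ enters). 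So $\mu_{\min}(N)\ge 0$ and the inductive hypothesis applies to $N$:
\[ h^0(Q)\le h^0(N)+h^0(M)\le\Bigl((r-1)+\tfrac{\deg N}{2}\Bigr)+\Bigl(\tfrac m2+1\Bigr)=r+\tfrac{\deg Q}{2},\]
since $\deg N=\deg Q-m$. This closes the induction.

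The step I expect to be the main obstacle is precisely the control of $N=\ker\phi$: kernels (and quotients) of a semistable bundle need not be semistable, so a priori the inductive hypothesis is unavailable for $N$. The resolution is the two-part device above — first reduce to $\mu\ge g-1$ via the symmetry $Q\leftrightarrow K\otimes Q\dual$, which forces the numerical inequality $m\le 2\mu$ for the image of \emph{any} homomorphism $\phi\colon Q\to K$, and then combine $m\le 2\mu$ with semistability of $Q$ to deduce $\mu_{\min}(N)\ge0$. I would expect the equality analysis promised in the statement to come out of tracking these inequalities, with the extremal cases being $Q\cong\ko^{\oplus r}$ at slope $0$ and $Q\cong K^{\oplus r}$ at slope $2g-2$.
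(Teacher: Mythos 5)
Your proposal is correct, and it is essentially the paper's own proof read through Serre duality: the same induction on rank with the same Harder--Narasimhan splitting in the non-semistable case, the same use of the symmetry $Q \leftrightarrow \omega_X \otimes Q\dual$ to restrict to one half of the slope window $[0,2g-2]$, and the same key mechanism by which semistability of $Q$ plus the normalized slope forces the complementary rank-$(r-1)$ piece to satisfy the inductive hypotheses. The only difference is the dual normalization: the paper reduces to $\mu \le g-1$ and splits off a line subbundle coming from a global section, ruling out subsheaves of slope $>2g-2$ in the quotient, whereas you reduce to $\mu \ge g-1$ and split off the line-bundle image $M \subseteq \omega_X$ of a nonzero $\phi\colon Q \to \omega_X$, verifying $\mu_{\min}(N)\ge 0$ for the kernel via $\deg B \ge \mu(\rank B + 1) - m \ge \mu(\rank B -1) \ge 0$ --- exactly the mirror of the paper's computation in its Case 2.1.
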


The special case of semistable vector bundles of slope $\mu \in [0,2g-2]$ was already proved in \cite[Theorem 2.1]{BGN}. If the slope of a semistable bundle $E$ is not in this interval, then either $H^0(E)=0$ or $H^1(E)=0$, and the dimension of the remaining cohomology group is computed by the Riemann--Roch theorem.

\subsection*{Known results}

\noindent
Classically, Picard sheaves are pushforwards of Poincar\'e bundles along the projection $X\times\Pic^d(X) \to \Pic^d(X)$, see \cite{Mattuck, Schwarzenberger, Mukai}. Later on, this notion was extended to pushforwards of universal bundles onto the moduli space.

Kempf \cite{Kempf} has shown that the Picard bundle on $\Pic^d(X)$ is stable for $d=2g-1$, the smallest degree where the Picard complex $\shat{\ko}_X = (\shat{\ko}_X)^0$ is a vector bundle. Here and later on, stability on the Picard variety is meant with respect to the polarisation by the theta divisor.
In \cite{EL}, Ein and Lazarsfeld proved the stability of Picard bundles on $\Pic^d(X)$ for $d\geq 2g$. They do this by restricting the Picard bundles to the canonical curves $X\subset\Pic^d(X)$ and $(-X)\subset\Pic^d(X)$.

Li \cite{Li} considers the moduli space $\cM_{r,d}$ of stable bundles on $X$ of rank $r$ and degree $d$ with $d > 2r(g-1)$ and $(d,r)=1$. If $\cU$ denotes the universal bundle on $X\times\cM_{r,d}$, then Li shows that the Picard bundle $\pr_{2*}\cU$ on $\cM_{r,d}$ is stable if $d>2gr$.

In \cite{BBGN}, the authors consider the same question for the moduli space of stable bundles of rank $r$ and fixed determinant $L$. Then the associated Picard bundle on the moduli space is stable (with respect to the unique polarisation) if $\deg(L) > 2r(g-1)$ and $(r,\deg(L))=1$.

Here, we go back to the case of Jacobians, but now we consider the Poincar\'e bundle twisted by a vector bundle pulled back from the curve. In other words, we study preservation of (semi)stability for the Fourier--Mukai transform along the Poincar\'e bundle, taking bundles on the curve to sheaves on its Picard variety.

\subsubsection*{Acknowledgments}
This work has been supported by SFB/TR 45
``Periods, moduli spaces and arithmetic of algebraic varieties''.

\subsubsection*{Conventions}
$X$ is always a smooth, projective curve of genus $g$ over a fixed algebraically closed field.
The \emph{slope} of a vector bundle is denoted $\mu(E) = \deg(E)/\rank(E)$.
Note $\mu(E\otimes F) = \mu(E) + \mu(F)$.
We repeatedly use the Riemann--Roch formula, always in the form $\chi(E) = \rank(E)(\mu(E) + 1-g)$.
We write $h^0(E) = \dim H^0(E)$ and $h^1(E) = \dim H^1(E)$.
We denote projections by $\pr_X,\pr_Y\colon X\times Y\to X$ or by $\pr_1,\pr_2\colon X\times X\to X$.
Given sheaves $E$ and $F$ on $X$, then as usual we write $E\boxtimes F = \pr_1^*E\otimes\pr_2^*F$.
Sometimes, we follow standard usage and pack two statements into one, using (semi)stability and $\sleq$.

\section{Orthogonality and stability}

\subsection{Definition of orthogonality and first properties}

\noindent
We first recall a classical result of Faltings \cite[Theorem~1.2]{Faltings}, which expresses semistability of a vector bundle on a curve as an orthogonality condition on $X$:

\begin{theorem}[Faltings 1993] \label{thm:Faltings}
A vector bundle $E$ on $X$ is semistable if and only if there exists a vector bundle $0\ne F$ such that $H^*(E\otimes F)=0$.
\end{theorem}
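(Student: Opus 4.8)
The statement has two implications, and the plan is to treat them separately, since one is an elementary slope computation and the other is the genuine content of Faltings' theorem. For the implication ``existence of $F$ $\Rightarrow$ semistability'', I would argue directly. Assume $0\ne F$ with $H^*(E\otimes F)=0$; then $\chi(E\otimes F)=0$, and the Riemann--Roch formula $\chi(E\otimes F)=\rank(E)\rank(F)(\mu(E)+\mu(F)+1-g)$ forces $\mu(E)+\mu(F)=g-1$. For any nonzero subsheaf $S\subseteq E$, flatness of $F$ gives an inclusion $S\otimes F\subseteq E\otimes F$, hence $H^0(S\otimes F)\hookrightarrow H^0(E\otimes F)=0$, so $\chi(S\otimes F)=-h^1(S\otimes F)\le0$. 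Riemann--Roch again yields $\mu(S)+\mu(F)+1-g\le0$, i.e.\ $\mu(S)\le g-1-\mu(F)=\mu(E)$. As $S$ was arbitrary, $E$ is semistable. Note that this direction uses only the vanishing of $H^0$ together with $\chi=0$.

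For the converse, given $E$ semistable I must \emph{produce} an orthogonal $F$. First I would pin down the numerical type: the vanishing $H^*(E\otimes F)=0$ again forces $\mu(E)+\mu(F)=g-1$, so I fix a rank $n$ divisible enough that there is an integer degree realising $\mu(F)=g-1-\mu(E)$. It then suffices to exhibit a single $F$ of this numerical type with $H^0(E\otimes F)=0$, because the slope relation makes $\chi(E\otimes F)=0$ automatic, so that $h^0=0$ forces $h^1=0$ as well. Equivalently, parametrising candidate bundles in an irreducible family with $\chi(E\otimes F)\equiv 0$, the goal becomes to show that the ``theta locus'' $\{F : H^0(E\otimes F)\ne0\}$ is a proper closed subset, so that a general member of the family is orthogonal to $E$.

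The hard part is exactly this non-emptiness of the complement of the theta locus, and the essential subtlety is that the rank of $F$ cannot be bounded. Raynaud's examples of semistable bundles without theta divisors show that for certain $E$ there is \emph{no} line bundle $L$ with $H^0(E\otimes L)=0$, so one cannot take $F$ of rank one, nor build $F$ naively as an extension whose subquotients are such line bundles. My plan is therefore to construct $F$ of high rank as a sufficiently general iterated extension of line bundles of degree $g-1$ taken off the classical theta divisor, and to control $h^0(E\otimes F)$ through the \emph{connecting maps} of the extensions rather than through the subquotients alone: for an extension $0\to F'\to F\to L\to0$ with class $e\in\Ext^1(L,F')$, a section of $E\otimes F$ either comes from $E\otimes F'$ or maps to a section of $E\otimes L$ annihilated by the connecting homomorphism $\delta_e$, and for general $e$ this $\delta_e$ is as injective as possible. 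I would run an induction in which each step estimates the dimension of the locus of extension classes for which some nonzero section survives and shows, using semistability of $E$ to bound $h^0(E\otimes G)$ uniformly for the relevant subquotients $G$ in terms of slopes, that this locus has positive codimension; the generic extension then has strictly smaller $h^0$, and after finitely many steps $h^0(E\otimes F)=0$. Proving that this dimension count always beats the naive lower bound on $h^0$ is the crux, and it is the only step that genuinely uses semistability of $E$ in a non-formal way.
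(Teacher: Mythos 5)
Note first that the paper itself only proves one direction of this statement: the implication ``$\exists F$ with $H^*(E\otimes F)=0$ $\Rightarrow$ $E$ semistable'' is proved, and the converse is deferred to Faltings' original paper \cite{Faltings}. Your treatment of the easy implication is correct and essentially identical to the paper's: the paper argues by contradiction with a destabilising subsheaf $G\subset E$, getting $\chi(G\otimes F)>\chi(E\otimes F)=0$ and hence a nonzero section of $G\otimes F\subseteq E\otimes F$; you run the same Riemann--Roch computation directly over all subsheaves $S\subseteq E$, using $h^0(S\otimes F)=0$ to get $\chi(S\otimes F)\le 0$ and hence $\mu(S)\le\mu(E)$. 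These are the same argument up to contraposition, and your observation that only $H^0$-vanishing plus $\chi=0$ is needed is accurate.

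For the hard implication, however, your proposal contains a genuine gap, and you have in fact located it yourself: the assertion that at each extension step $0\to F'\to F\to L\to 0$ the locus of classes $e\in\Ext^1(L,F')$ for which the connecting map $\delta_e\colon H^0(E\otimes L)\to H^1(E\otimes F')$ fails to kill sections has positive codimension is precisely the content of the theorem, not a step toward it. The exact sequence bookkeeping $h^0(E\otimes F)=h^0(E\otimes F')+\dim\ker\delta_e$ is fine, and $\delta_e$ is linear in $e$ (cup product with the extension class), but nothing soft guarantees that \emph{some} $e$ makes $\delta_e$ of maximal rank: this is a nondegeneracy statement for the bilinear pairing $\Ext^1(L,F')\times H^0(E\otimes L)\to H^1(E\otimes F')$, and it can fail for individual steps; Raynaud's examples \cite{Raynaud} show the failure is not confined to the rank-one base of your induction. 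Making the dimension count work --- choosing the rank of $F$ large enough, in terms of the numerical invariants of $E$, that the jumping loci cannot fill the parameter space of iterated extensions --- is exactly what Faltings' proof accomplishes, and what Popa's effective refinement (quoted in the paper as \hyref{Theorem}{thm:Popa}, with the explicit rank bound $P(r,h)$ obtained from Quot-scheme dimension estimates \cite{Popa}) quantifies. As written, your induction hypothesis (``the generic extension has strictly smaller $h^0$'') is asserted where it must be proved, with no mechanism supplied that uses semistability of $E$ to beat the naive bounds; so the proposal proves the direction the paper proves, and sketches but does not prove the direction the paper cites.
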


\begin{proof}
For the convenience of reader, we prove the easy implication of this statement. Assume $H^*(E\otimes F)=0$, and let $0\ne G\subset E$ be a destabilising subsheaf, i.e.\ $\mu(G) > \mu(E)$.

Then $G\otimes F \subset E\otimes F$ with $\mu(G\otimes F) > \mu(E\otimes F)$.
Since $H^*(E\otimes F)=0$, Riemann--Roch gives $\chi(G\otimes F) > \chi(E\otimes F) = 0$. Hence $h^0(G\otimes F) > 0$, which contradicts
 $H^0(G\otimes F) \subseteq H^0(E\otimes F) = 0$.
\end{proof}

Here, we introduce two other orthogonality notions, on $X\times X$, which work well with Picard sheaves:

\begin{definition}
Two coherent sheaves $E$ and $F$ on $X$ are called \emph{orthogonal with respect to $-\Delta$} if the sheaf
 $\pr_1^*E \otimes \ko_{X\times X}(-\Delta) \otimes \pr_2^* F$ on $X\times X$ has vanishing cohomology. Analogously, $E$ and $F$ are \emph{orthogonal with respect to $\Delta$} if the cohomology groups of $\pr_1^*E \otimes \ko_{X\times X}(\Delta) \otimes \pr_2^* F$ vanish. In short, we write
\begin{align*}
   E \ndorth F &\iff H^*(E \boxtimes F(-\Delta)) = 0 , \\
   E \dorth F  &\iff H^*(E \boxtimes F(\Delta))  = 0 .
\end{align*}
\end{definition}

Orthogonality has the following simple numerical description:

\begin{lemma} \label{lem:numORTH}
Let $E$ and $F$ be coherent sheaves on $X$.
\begin{enumerate}[label=(\roman*)]
  \item If $H^*(E\otimes F)=0$, then $\mu(F) = -\mu(E)+g-1$. \\
  \item If $E\ndorth F$, then $\mu(F) = g + \displaystyle\frac{g}{\mu(E)-g}$.
\end{enumerate}
\end{lemma}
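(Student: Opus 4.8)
The plan is to reduce both statements to the vanishing of an Euler characteristic and then apply Riemann--Roch. For part (i), the key observation is that $H^*(E\otimes F)=0$ forces $\chi(E\otimes F)=0$. Using Riemann--Roch in the form $\chi(E\otimes F) = \rank(E\otimes F)\bigl(\mu(E\otimes F)+1-g\bigr)$, together with $\rank(E\otimes F)=\rank(E)\rank(F)\neq 0$ and the additivity $\mu(E\otimes F)=\mu(E)+\mu(F)$, the vanishing of $\chi$ immediately gives $\mu(E)+\mu(F)+1-g=0$, which is the claimed identity $\mu(F)=-\mu(E)+g-1$. This part is essentially a one-line Riemann--Roch computation.

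Part (ii) is where the real work lies, since the relevant cohomology now lives on the surface $X\times X$. First I would record that $E\ndorth F$ means $\chi\bigl(\pr_1^*E\otimes\ko_{X\times X}(-\Delta)\otimes\pr_2^*F\bigr)=0$. To compute this Euler characteristic I would use the structure sequence of the diagonal,
\[
0 \to \ko_{X\times X}(-\Delta) \to \ko_{X\times X} \to \ko_\Delta \to 0,
\]
tensor it with $\pr_1^*E\otimes\pr_2^*F$, and use additivity of $\chi$ to obtain
\[
\chi\bigl(\pr_1^*E\otimes\pr_2^*F(-\Delta)\bigr) = \chi\bigl(\pr_1^*E\otimes\pr_2^*F\bigr) - \chi\bigl((\pr_1^*E\otimes\pr_2^*F)|_\Delta\bigr).
\]
The first term on the right is computed by the K\"unneth formula as $\chi(E)\chi(F)$, while the restriction to the diagonal $\Delta\cong X$ identifies the last term with $\chi(E\otimes F)$ on the curve. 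Hence the orthogonality condition collapses to the clean equation $\chi(E)\chi(F)=\chi(E\otimes F)$.

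It then remains to substitute the Riemann--Roch expressions. Writing $r,s$ for the ranks and $a=\mu(E)$, $b=\mu(F)$, this reads
\[
rs\,(a+1-g)(b+1-g) = rs\,(a+b+1-g).
\]
Cancelling $rs\neq 0$ and solving the resulting linear equation for $b$ yields $b = g + \tfrac{g}{a-g}$, i.e.\ the asserted formula $\mu(F)=g+\tfrac{g}{\mu(E)-g}$; in particular this presupposes $\mu(E)\neq g$, which is precisely the degenerate case where the equation has no solution and no $-\Delta$-orthogonal partner can exist. The main obstacle I anticipate is bookkeeping rather than conceptual: to stay faithful to the stated generality of arbitrary coherent sheaves, one should interpret the diagonal term via the derived restriction $Li^*$ to $\Delta$ and check that the potential Tor-contributions leave the Euler characteristic unchanged, so that the numerical identification with $\chi(E\otimes F)$ survives. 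Once this is in place, both identities are forced by Riemann--Roch alone.
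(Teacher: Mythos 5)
Your proof is correct and follows essentially the same route as the paper: the diagonal structure sequence on $X\times X$, the K\"unneth formula, and Riemann--Roch, reducing the orthogonality hypothesis to the equation $\chi(E)\chi(F)=\chi(E\otimes F)$ and solving for $\mu(F)$. The only (cosmetic) difference is that you pass to Euler characteristics directly via additivity of $\chi$, where the paper first records the isomorphisms $H^*(E\otimes F)\cong H^*(E\boxtimes F)\cong H^*(E)\otimes H^*(F)$ from the long exact cohomology sequence; your explicit remarks about the degenerate case $\mu(E)=g$ and the derived restriction to $\Delta$ for non-locally-free sheaves are sound refinements the paper leaves implicit.
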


\begin{proof}
(i) follows readily from Riemann--Roch.

(ii) The cohomology of the exact sequence $0 \to E\boxtimes F(-\Delta) \to E\boxtimes F \to E\boxtimes F|_\Delta \to 0$, using
 $E\boxtimes F|_\Delta \cong E\otimes F$ and $H^*(E\boxtimes F(-\Delta))=0$ from $E\ndorth F$ gives:
 $H^*(E\otimes F) \cong H^*(E\boxtimes F) \cong H^*(E) \otimes H^*(F)$, the latter isomorphism from the K\"unneth formula. Hence
 $\chi(E\otimes F) = \chi(E)\chi(F)$, or
 $(\mu(E)+\mu(F)+1-g) = (\mu(E)+1-g)(\mu(F)+1-g)$ by Riemann--Roch.
Manipulating this equation yields the claimed formula.
\end{proof}

We collect the following statements for referability; the proofs are immediate:

\begin{lemma} \label{lem:sym}
 For vector bundles $E$ and $F$ on $X$, we have the three equivalences:
\begin{flalign*}
  &\qquad\text{(i)}    & E \ndorth F &\iff F \ndorth E, & \\[0.5ex]
  &\qquad\text{(ii)}   & E \dorth F  &\iff F \dorth E,  & \\[0.5ex]
  &\qquad\text{(iii)}  & E \ndorth F &\iff (\omega_X \otimes E \dual ) \dorth (\omega_X \otimes F \dual)
                        \qquad \text{(Serre duality). } &
\end{flalign*}
\end{lemma}

\begin{definition}
We define two functors $\Fp, \Fm \colon \Coh(X) \to \Db(X)$ by
\begin{align*}
  \Fp(E) &= \RR\pr_{2*}(\ko_{X \times X}(\Delta) \otimes \pr_1^*E) , \\
  \Fm(E) &= \RR\pr_{2*}(\ko_{X \times X}(-\Delta) \otimes \pr_1^*E) .
\intertext{We denote the cohomology sheaves by}
  \Fp^i &= R^i\pr_{2*}(\ko_{X \times X}(\Delta) \otimes \pr_1^*E)
\end{align*}
and similarly for $\Fm^i(E)$. Since the fibres are 1-dimensional and objects in $\Db(X)$ decompose into their cohomology sheaves, we have
  $\Fp(E) = \Fp^0(E) \oplus \Fp^1(E)[-1]$, and
  $\Fm(E) = \Fm^0(E) \oplus \Fm^1(E)[-1]$.
\end{definition}

\begin{lemma}\label{lem:ortho1}
For vector bundles $E$ and $F$ on $X$, we have the two equivalences
\[\begin{array}{lrcll}
       (i) &  E \ndorth F  & \iff & H^*(\Fm^0(E) \otimes F) = 0 \text{, and } H^*(\Fm^1(E) \otimes F) = 0
    & \text{ and} \\[0.5ex]
      (ii) &  E \dorth F   & \iff & H^*(\Fp^0(E) \otimes F) = 0 \text{, and } H^*(\Fp^1(E) \otimes F) = 0 .
\end{array}\]
\end{lemma}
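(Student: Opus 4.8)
The plan is to push the cohomology computation on $X\times X$ that defines $\ndorth$ down to $X$, where it becomes a statement about $\Fm(E)\otimes F$, and then to use the splitting of $\Fm(E)$ into its cohomology sheaves to separate the two vanishing conditions. I will spell out only part (i); part (ii) is word-for-word identical after replacing $-\Delta$ by $\Delta$ and $\Fm$ by $\Fp$.

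First I would factor global sections through $\pr_2$. Leray gives $\RR\Gamma(X\times X,-)=\RR\Gamma(X,\RR\pr_{2*}(-))$, and the projection formula lets me extract the factor $\pr_2^*F$; since $\pr_1^*E\otimes\ko_{X\times X}(-\Delta)$ and $F$ are both locally free, no higher Tor appears and the remaining pushforward is exactly $\Fm$:
\[
 \RR\pr_{2*}\bigl(\pr_1^*E\otimes\ko_{X\times X}(-\Delta)\otimes\pr_2^*F\bigr)\;\cong\;\Fm(E)\otimes F .
\]
Combining this with Leray, the defining condition $E\ndorth F$, namely $H^*(E\boxtimes F(-\Delta))=0$, is equivalent to $H^*(\Fm(E)\otimes F)=0$.

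It then remains to split this single vanishing into the two stated conditions. Using the decomposition $\Fm(E)=\Fm^0(E)\oplus\Fm^1(E)[-1]$ recorded in the Definition — valid because on a curve every object of $\Db(X)$ is the direct sum of its shifted cohomology sheaves — and the fact that tensoring with the locally free $F$ commutes with it, I obtain
\[
 \Fm(E)\otimes F \;=\; \bigl(\Fm^0(E)\otimes F\bigr)\;\oplus\;\bigl(\Fm^1(E)\otimes F\bigr)[-1].
\]
Its hypercohomology is the direct sum of $H^*(\Fm^0(E)\otimes F)$ and $H^{*-1}(\Fm^1(E)\otimes F)$, so it vanishes if and only if both summands vanish, which is exactly equivalence (i).

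There is no genuine obstacle here: the argument is an assembly of Leray, the projection formula, and the formality of complexes on a curve. The two points needing care are that the projection formula produces no higher Tor correction — guaranteed by local freeness of $F$ — and the degree bookkeeping in the last step, where $\Fm^0(E)\otimes F$ contributes in cohomological degrees $0,1$ while $(\Fm^1(E)\otimes F)[-1]$ contributes in degrees $1,2$; because the two are added as a direct sum, the total hypercohomology vanishes precisely when the cohomology of each factor does.
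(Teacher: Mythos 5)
Your proof is correct and follows essentially the same route as the paper: both push the computation down along $\pr_2$ via Leray and the projection formula, identifying the pushforward of $\pr_1^*E\otimes\ko_{X\times X}(-\Delta)\otimes\pr_2^*F$ with $\Fm(E)\otimes F$ (resp.\ its cohomology sheaves $\Fm^i(E)\otimes F$). The only difference is cosmetic: where you invoke the splitting $\Fm(E)\cong\Fm^0(E)\oplus\Fm^1(E)[-1]$ (already recorded in the paper's definition of $\Fm$) to decompose hypercohomology, the paper runs the two-row Leray spectral sequence explicitly, whose degeneration for dimension reasons yields exactly the same equivalence of vanishings.
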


\begin{proof}
We only show (i), as the proof of (ii) works analogously.

Let
 $G=\pr_1^*E \otimes \ko_{X \times X}(-\Delta) \otimes \pr_2^* F$.
We compute the cohomology of $G$
using the Leray spectral sequence for $\pr_2$. For dimension reasons, the spectral sequence degenerates, thus
\[  H^0(G)= H^0(\pr_{2*}G) , \qquad  H^2(G)= H^1(R^1\pr_{2*}G) \,, \]
and a short exact sequence
\[ 0 \to H^1(\pr_{2*}G) \to H^1(G) \to  H^0(R^1\pr_{2*}G) \to 0 \,.\]
Therefore, we get $R^i\pr_{2*}(G) = \Fm^i(E) \otimes F$, using the projection formula, together with the definitions of $G$ and $\Fm^i(E)$. This proves both implications of the assertion, noting that $E \ndorth F$ is tantamount to $H^*(G)=0$.
\end{proof}

\begin{proposition} \label{prop:delta-orthogonality}
For a coherent sheaf $E$ on $X$, the following conditions are equivalent:
\begin{enumerate}[itemindent=1em,labelsep=1em]
    \item There exists a coherent $F \ne 0$ such that $E \ndorth F$.
    \item  $\Fm^0(E)$ and $\Fm^1(E)$ are semistable sheaves of the same slope.
\end{enumerate}
Similarly there exists such an equivalence for orthogonality with respect  to $+\Delta$.
\begin{enumerate}[itemindent=1em,labelsep=1em]
    \item[(1')] There exists a coherent $F \ne 0$ such that $E \dorth F$.
    \item[(2')]  $\Fp^0(E)$ and $\Fp^1(E)$ are semistable sheaves of the same slope.
\end{enumerate}
\end{proposition}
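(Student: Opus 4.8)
The plan is to combine the two facts already assembled: Faltings' criterion (Theorem~\ref{thm:Faltings}) and the dictionary of Lemma~\ref{lem:ortho1}, which turns $\Delta$-orthogonality into ordinary cohomological orthogonality against the transform sheaves $\Fm^i(E)$. I would prove the $-\Delta$ statement in full and then remark that the $+\Delta$ statement is verbatim the same, with $\Fp^i$ and Lemma~\ref{lem:ortho1}(ii) replacing $\Fm^i$ and Lemma~\ref{lem:ortho1}(i).

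For $(1)\Rightarrow(2)$, suppose $0\ne F$ satisfies $E\ndorth F$. By Lemma~\ref{lem:ortho1}(i) this is exactly the \emph{simultaneous} vanishing $H^*(\Fm^0(E)\otimes F)=0$ and $H^*(\Fm^1(E)\otimes F)=0$. First I would check that each $\Fm^i(E)$ is torsion-free, hence a vector bundle: a nonzero torsion subsheaf $T\subset\Fm^i(E)$ would give a nonzero torsion subsheaf $T\otimes F$ with $H^0(T\otimes F)\ne0$, contradicting $H^0(\Fm^i(E)\otimes F)=0$. The easy implication of Faltings' criterion (the one proved in the excerpt) then shows $\Fm^0(E)$ and $\Fm^1(E)$ are both semistable. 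Since the \emph{same} bundle $F$ is orthogonal to both, Lemma~\ref{lem:numORTH}(i) forces $\mu(\Fm^0(E))=-\mu(F)+g-1=\mu(\Fm^1(E))$, so they share a slope.

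For $(2)\Rightarrow(1)$, the only step needing an actual idea is to produce a \emph{single} bundle $F$ orthogonal to both transform sheaves at once; applying Faltings to $\Fm^0(E)$ and to $\Fm^1(E)$ separately would merely yield two a priori unrelated test bundles. The equal-slope hypothesis is exactly what removes this obstacle: the direct sum $\Fm^0(E)\oplus\Fm^1(E)$ is then again semistable (a direct sum of semistable bundles of the same slope is semistable). Feeding this sum into the nontrivial direction of Faltings' criterion yields $0\ne F$ with $H^*\bigl((\Fm^0(E)\oplus\Fm^1(E))\otimes F\bigr)=0$, and additivity of cohomology over the direct sum splits this into $H^*(\Fm^0(E)\otimes F)=0$ and $H^*(\Fm^1(E)\otimes F)=0$. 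Reading this back through Lemma~\ref{lem:ortho1}(i) gives $E\ndorth F$, closing the equivalence.

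The point requiring the most care is bookkeeping about local freeness rather than any deep geometry: Faltings' criterion and Lemma~\ref{lem:ortho1} are phrased for vector bundles, so I must ensure the transform sheaves $\Fm^i(E)$ and the test bundle $F$ genuinely lie in the locally free category before invoking them. The torsion-freeness observation above handles the $\Fm^i(E)$, and the $F$ delivered by Faltings is a vector bundle by construction; the slope arithmetic and the cohomological additivity are then entirely routine. I therefore expect this mild coherence-versus-local-freeness issue, together with the direct-sum trick in $(2)\Rightarrow(1)$, to be the only genuinely load-bearing parts of the argument.
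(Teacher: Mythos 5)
Your proof is correct and takes essentially the same route as the paper's: Lemma~\ref{lem:ortho1} translates $E\ndorth F$ into the simultaneous vanishing $H^*(\Fm^i(E)\otimes F)=0$, the easy direction of Theorem~\ref{thm:Faltings} plus Lemma~\ref{lem:numORTH}(i) yields $(1)\Rightarrow(2)$, and the equal-slope direct-sum trick fed into the nontrivial direction of Faltings yields $(2)\Rightarrow(1)$. Your extra verification that the $\Fm^i(E)$ are torsion-free (hence locally free) is a harmless refinement that the paper leaves implicit.
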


\begin{proof} We start with $(1) \implies (2)$. Assume $E \ndorth F$ for some $F \ne 0$.
From \hyref{Lemma}{lem:ortho1} we conclude
 $H^*(\Fm^i(E) \otimes  F) = 0$ for $i \in \{0,1\}$.
By the easy direction of \hyref{Theorem}{thm:Faltings}, $\Fm^i(E)$ is semistable.
Moreover, we get $\mu(\Fm^i(E)) = -\mu(F) + g - 1$ from \hyref{Lemma}{lem:numORTH}(i)

$(2) \implies (1)$. With $\Fm^0(E)$ and $\Fm^1(E)$ semistable of the same slope, their direct sum $\Fm^0(E) \oplus \Fm^1(E)$ is semistable as well. Thus by \hyref{Faltings' Theorem}{thm:Faltings}, there exists a sheaf $F \ne 0$  such that
 $H^*((\Fm^0(E) \oplus \Fm^1(E) )\otimes F)=0$.
By \hyref{Lemma}{lem:ortho1} we are done.
\end{proof}

\begin{remark}
Whenever we have an orthogonal pair $E \ndorth F$ with non-zero sheaves $E$ and $F$, then we conclude that the six sheaves
 $E$, $\Fm^0(E)$, $\Fm^1(E)$, $F$, $\Fm^0(F)$, and $\Fm^1(F)$
are semistable.

However, in most situations we consider here, one of the two sheaves $\Fm^0(E)$ or $\Fm^1(E)$ will be zero.
Anyway, they cannot be both zero, as the following argument shows:
Set $r=\rank(E)$, $R=\rank(\Fm^0(E))-\rank(\Fm^1(E))$, $d=\deg(E)$, and $D=\deg(\Fm^0(E))-\deg(\Fm^1(E))$.
A short Riemann--Roch computation along the lines of \hyref{Lemma}{lem:numORTH}(ii) gives
\[
    \left( \begin{array}{c} R\\D\end{array} \right) =
    \left( \begin{array}{rr} -g & 1\\0 & -1\end{array} \right)
    \left( \begin{array}{c} r\\d\end{array} \right)  .
\]
Thus, we can deduce the pair $(r,d)$  from $(R,D)$ unless $g=0$.
\end{remark}

\subsection{Picard sheaves, and embedding $X$ into the Picard scheme}
We denote by $\Pic = \Pic^0(X)$ the Picard scheme of the smooth curve $X$ and by $\kp$ the Poincar\'e bundle on $X\times\Pic$. Fixing a point $P_0 \in X(k)$, we normalise the Poincar\'e bundle by the additional assumption that
 $\kp|_{\{P_0\} \times \Pic} \cong \ko_{\Pic}$.
The projections from $X\times\Pic$ will be denoted
\[ \xymatrix{ X & X \times \Pic \ar[l]_(0.57){\pr_X} \ar[r]^-{\pr_P} & \Pic} . \]
For a coherent sheaf $E$ on $X$, we define its \emph{Picard complex} to be the object
\[ \shat{E} \coloneqq \FM_\kp(E) = \RR\pr_{P*}(\kp \otimes \pr_X^* E) \]
in the derived category $\Db(\Pic)$.
Since $\kp$ is $\pr_1$-flat, and $\pr_2$ is of dimension 1, we have only two
cohomology sheaves of our complex, and we call these the \emph{Picard sheaves} of $E$:
\begin{align*}
 \shat{E}^0 &\coloneqq \FM^0_\kp(E) =    \pr_{P*}(\kp \otimes \pr_X^* E)  \, , \quad \mbox{ and } \\
 \shat{E}^1 &\coloneqq \FM^1_\kp(E) = R^1\pr_{P*}(\kp \otimes \pr_X^* E) .
\end{align*}
We are interested mainly in the case when one of $\shat{E}^0$ or $\shat{E}^1$ is zero.
To study their semistability, we will restrict them to the curves $(X)_M$ and
$(-X)_N$, to be defined next.

For any line bundle $M$ of degree 1, we have an embedding of
 $\iota_M\colon X \to \Pic$ given by $P \mapsto M(-P)$. The image of $\iota_M$ is denoted by
$(-X)_M$. In the same way, any line bundle $N$ of degree $-1$ defines an embedding
 $\iota_N \colon X \to \Pic$ by $\iota_N(P)=N(P)$ with image $(X)_N$.
The next proposition gives the restriction of
$\shat{E}^i$ to the curves $(X)_N$ and $(-X)_M$.

\begin{proposition} \label{prop:FM-restriction}
Let $E$ be a coherent sheaf on $X$. \\
For arbitrary line bundles $N\in\Pic^{-1}(X)$ and $M\in\Pic^1(X)$, there are isomorphisms
\begin{align*}
 \FM_\kp(E) \otimes^\LL \ko_{(X)_N}  &\cong \Fp(E \otimes N) \otimes \ko_X(-P_0) , \\
 \FM_\kp(E) \otimes^\LL \ko_{(-X)_M} &\cong \Fm(E \otimes M) \otimes \ko_X(P_0)
\end{align*}
in the derived category of $(X)_N \cong X \cong (-X)_M$.
Moreover, for general line bundles $N\in\Pic^{-1}(X)$ and $M\in\Pic^1(X)$, there are isomorphisms of sheaves
\[ \begin{array}{l@{~}c@{~}l @{\qquad} l@{~}c@{~}l}
      \shat{E}^0|_{(X)_N}   &\cong& \Fp^0(E \otimes N) \otimes \ko_X(-P_0) , &
      \shat{E}^1|_{(X)_N}   &\cong& \Fp^1(E \otimes N) \otimes \ko_X(-P_0) \\[0.5ex]
      \shat{E}^0|_{(-X)_M}  &\cong& \Fm^0(E \otimes M) \otimes \ko_X(P_0) , &
      \shat{E}^1|_{(-X)_M}  &\cong& \Fm^1(E \otimes M) \otimes \ko_X(P_0) .
  \end{array}\]
\end{proposition}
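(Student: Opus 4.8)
The plan is to compute the derived restriction of the Fourier–Mukai transform $\FM_\kp(E)$ to the embedded curves $(X)_N$ and $(-X)_M$ by pulling back the Poincaré bundle along the corresponding embedding of $X$ into $X \times \Pic$, and then identifying the result with a twist of the functors $\Fp$ and $\Fm$. The key geometric input is a base-change diagram: the embedding $\iota_N \colon X \to \Pic$, $P \mapsto N(P)$, lifts to a map $X \times X \to X \times \Pic$ (identity on the first factor, $\iota_N$ on the second), and restricting $\FM_\kp(E)$ to $(X)_N$ amounts to applying flat base change along this square.

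\smallskip\noindent
First I would set up the base-change square and invoke cohomology-and-base-change (the $\pr_P$-flatness of $\kp \otimes \pr_X^*E$ is already noted, so flat base change applies and commutes with $\RR\pr_{2*}$). This reduces the problem to computing the pullback $(\id \times \iota_N)^* \kp$ on $X \times X$. The crucial calculation is to identify this pullback: by the seesaw/universal property of the Poincaré bundle, together with the normalisation $\kp|_{\{P_0\}\times\Pic} \cong \ko_\Pic$, the restriction of $\kp$ to $X \times \{N(P)\}$ is the degree-zero line bundle $N(P)(-P_0)$. Translating the family $P \mapsto N(P)(-P_0)$ over the base $X$ into a line bundle on $X \times X$ yields $\ko_{X\times X}(\Delta) \otimes \pr_2^*N \otimes \pr_1^*\ko_X(-P_0)$ (the diagonal contributes the $(\Delta)$ twist via the point $P$, the factor $N$ is constant along the fibres, and $-P_0$ is the normalising divisor pulled back from the first factor). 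The analogous computation for $(-X)_M$, where $P \mapsto M(-P)$, replaces $\Delta$ by $-\Delta$ and $-P_0$ by $+P_0$, giving $\ko_{X\times X}(-\Delta) \otimes \pr_2^*M \otimes \pr_1^*\ko_X(P_0)$.

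\smallskip\noindent
Once the pullback of $\kp$ is identified, the first pair of isomorphisms follows by substituting into $\RR\pr_{2*}\bigl((\id\times\iota_N)^*\kp \otimes \pr_1^*E\bigr)$ and absorbing the $\pr_2^*N$ factor into $E$ via the projection formula, so that $\pr_1^*(E\otimes N)$ appears and the definition of $\Fp(E\otimes N)$ is recovered, with the leftover $\pr_1^*\ko_X(-P_0)$ producing the claimed twist by $\ko_X(-P_0)$. For the second set of statements, the passage from the derived-category isomorphism to isomorphisms of individual cohomology sheaves requires that the derived tensor product $\otimes^\LL \ko_{(X)_N}$ introduce no lower $\mathrm{Tor}$ contributions; since $(X)_N \cong X$ is a smooth divisor-like subvariety and $\FM_\kp(E)$ has cohomology concentrated in two degrees, for \emph{general} $N$ and $M$ the restriction is non-derived on each cohomology sheaf, which is exactly why the sheaf-level statements carry the genericity hypothesis. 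I would make this precise by a dimension count showing the base-change cohomology jumps only on a proper closed subset of $\Pic^{-1}(X)$, respectively $\Pic^1(X)$.

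\smallskip\noindent
\textbf{The main obstacle} I expect is the explicit identification of $(\id\times\iota_N)^*\kp$ as a line bundle on $X\times X$, together with pinning down the correct twists by $\ko_X(\pm P_0)$; this is where the normalisation of $\kp$ and the precise sign conventions for $\Delta$ enter, and it is easy to misplace a factor. The base-change and projection-formula manipulations are routine once that line bundle is in hand, and the genericity argument for the splitting into cohomology sheaves is a standard semicontinuity statement.
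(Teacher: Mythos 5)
Your overall strategy --- derived base change along $\id\times\iota_N\colon X\times X\to X\times\Pic$, identification of the pulled-back Poincar\'e bundle by seesaw, projection formula, and a genericity argument for the sheaf-level statements --- is exactly the paper's, but the crucial computation, which you yourself flag as the main obstacle, is carried out incorrectly, in two ways. First, the fibrewise description is wrong: by the universal property, $\kp|_{X\times\{L\}}\cong L$ for every $L\in\Pic^0(X)$, so the restriction of $\kp$ to $X\times\{N(P)\}$ is the degree-$0$ bundle $N(P)=N\otimes\ko_X(P)$, not ``$N(P)(-P_0)$'' (which would have degree $-1$); the normalisation $\kp|_{\{P_0\}\times\Pic}\cong\ko_\Pic$ does not twist the fibres over points of $\Pic$, it only pins down the ambiguity of tensoring by a line bundle pulled back from $\Pic$. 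Second, your global formula $\ko_{X\times X}(\Delta)\otimes\pr_2^*N\otimes\pr_1^*\ko_X(-P_0)$ attaches $N$ and $\ko_X(-P_0)$ to the wrong factors: its restriction to a fibre $X\times\{P\}$ is $\ko_X(P-P_0)$, which matches neither your own (incorrect) fibre description nor the correct one. Seesaw gives instead $(\id\times\iota_N)^*\kp\cong\pr_1^*N\otimes\ko_{X\times X}(\Delta)\otimes\pr_2^*\ko_X(-P_0)$: the restriction to each $X\times\{P\}$ forces the factor $\pr_1^*N\otimes\ko_{X\times X}(\Delta)$, and restricting to $\{P_0\}\times X$, where $\ko_{X\times X}(\Delta)$ becomes $\ko_X(P_0)$, the normalisation forces the twist $\pr_2^*\ko_X(-P_0)$.

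This is not a cosmetic slip: feeding your formula into the projection-formula step yields $\RR\pr_{2*}\bigl(\pr_1^*(E\otimes\ko_X(-P_0))\otimes\ko_{X\times X}(\Delta)\bigr)\otimes N\cong\Fp(E\otimes\ko_X(-P_0))\otimes N$ rather than the asserted $\Fp(E\otimes N)\otimes\ko_X(-P_0)$, so the proof of the first pair of isomorphisms fails as written; the same transposition occurs in your $(-X)_M$ case, where the correct pullback is $\pr_1^*M\otimes\ko_{X\times X}(-\Delta)\otimes\pr_2^*\ko_X(P_0)$. With the corrected identification the rest of your plan goes through and coincides with the paper's proof. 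Note also that for the sheaf-level statements your appeal to semicontinuity and a dimension count is only a placeholder: the paper makes the genericity precise by resolving $E$ as $0\to E_1\to E_0\to E\to 0$ with $\mu_{\max}(E_i)<0$, so that $\FM_\kp(E)$ is represented by a two-term complex of vector bundles $\shat{E}_1^1\to\shat{E}_0^1$ on $\Pic$, and then observing that a general curve $(X)_N$, resp.\ $(-X)_M$, contains no associated component of the cohomology sheaves of this complex, whence tensoring with its structure sheaf is exact and restriction commutes with taking cohomology.
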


\begin{proof}
For the formula for $\FM_\kp(E) \otimes^\LL \ko_{(X)_N}$, we compute
\begin{align*}
         \FM_\kp(E) \otimes^\LL \ko_{(X)_N} 
  &=     \RR\pr_{P*}( \pr_X^*E  \otimes \kp) \otimes^\LL \ko_{(X)_N}      & \text{(definition of $\FM_\kp$)} \\
  &\cong \RR\pr_{P*}( \pr_X^*E  \otimes \kp \otimes \pr_P^* \ko_{(X)_N})  & \text{(projection formula)} \\
  &\cong \RR\pr_{P*}( \pr_X^*E  \otimes \kp|_{ X \times (X)_N})            & \text{($\kp$ locally free)} \\
  &\cong \RR\pr_{2*}( \pr_1^*(E \otimes N) \otimes \ko_{X \times X}(\Delta) \otimes \pr_2^*\ko_X(-P_0))
             \hspace{-1em}                                             & \text{($\star$)} \\
  &\cong \RR\pr_{2*}( \pr_1^*(E \otimes N) \otimes \ko_{X \times X}(\Delta) ) \otimes \ko_X(-P_0)
                                                                       & \text{(projection formula)} \\ 
  &=     \Fp(E \otimes N) \otimes \ko_X(-P_0)                          & \text{(definition of $\Fp$)} 
\end{align*}
where in $\text{($\star$)}$, we identify $(X)_N$ with $X$, so the universal family $\kp$ restricted to  $X \times (X)_N$ is the line bundle
 $\pr_1^*N \otimes \ko_{X \times X}(\Delta) \otimes \pr_2^*\ko_X(-P_0)$ on $X \times X$.

Choosing a nice resolution $0 \to E_1 \to E_0 \to E \to 0$ where the $E_i$ are vector bundles with $\mu_{\max }(E_i) < 0$, we see that $\FM_\kp(E)$ can be represented by the complex of vector bundles $\shat{E}_1^1 \to \shat{E}_0^1$ on $\Pic$. For a general $N$, the curve $(X)_N$ does not contain any of the associated components of the cohomologies of that complex. Thus tensoring with $\ko_{(X)_N}$ is exact. Similarly for $M$.
\end{proof}

\subsection{Orthogonality proofs of semistability of Picard sheaves}

\begin{proposition} \label{prop:generic-Picard-semistability}
For a coherent sheaf $E$ on $X$, the following implications hold
\[ (1) \iff (2) \implies (3) \impliedby (2') \iff (1') \]
among these five conditions:
\begin{enumerate}
\item[(1)] $(E \otimes M) \ndorth F$ for some line bundle $M$ of degree 1,
           and a sheaf $F \ne 0$.
\item[(2)] For a general line bundle $M$ of degree 1, the restrictions of the
           Picard sheaves $\shat{E}^0$ and $\shat{E}^1$ to the curve $(-X)_M$ are
           both semistable of the same slope.
\item[(3)] The sheaves $\shat{E}^0$ and $\shat{E}^1$ are $\mu$-semistable with
           respect to the theta divisor on $\Pic$.
\item[(1')] $(E \otimes N) \dorth F$ for some line bundle $N$ of degree $-1$,
            and a sheaf $F \ne 0$.
\item[(2')] For a general line bundle $N$ of degree $-1$, the restrictions of
            the Picard sheaves $\shat{E}^0$ and $\shat{E}^1$ to the curve $(X)_N$
            are both semistable of the same slope.
\end{enumerate}
\end{proposition}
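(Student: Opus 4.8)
The plan is to exploit the symmetry of the statement: interchanging $\Fm\leftrightarrow\Fp$, $\ndorth\leftrightarrow\dorth$, and the curves $(-X)_M \leftrightarrow (X)_N$ carries the unprimed conditions to the primed ones, so it suffices to establish $(1)\iff(2)$ and $(2)\implies(3)$; the equivalence $(1')\iff(2')$ and the implication $(2')\implies(3)$ then follow verbatim. The two workhorses are \hyref{Proposition}{prop:delta-orthogonality}, which says that orthogonality $(E\otimes M)\ndorth F$ for some $F\neq 0$ is equivalent to $\Fm^0(E\otimes M)$ and $\Fm^1(E\otimes M)$ being semistable of the same slope, and \hyref{Proposition}{prop:FM-restriction}, which identifies $\shat{E}^i|_{(-X)_M} \cong \Fm^i(E\otimes M)\otimes\ko_X(P_0)$ for general $M$. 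Since tensoring both $\Fm^0$ and $\Fm^1$ by the fixed line bundle $\ko_X(P_0)$ preserves semistability and shifts both slopes by $1$, the property ``semistable of the same slope'' passes unchanged between $\{\Fm^i(E\otimes M)\}$ and $\{\shat{E}^i|_{(-X)_M}\}$.

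For $(1)\iff(2)$, the implication $(2)\implies(1)$ is immediate: pick any general $M$, read off that the $\Fm^i(E\otimes M)$ are semistable of the same slope, and invoke \hyref{Proposition}{prop:delta-orthogonality}. For $(1)\implies(2)$ I must bridge the quantifier gap from ``some $M$'' to ``general $M$''. Fixing the witness $F_0$ and the one bundle $M_0$ provided by (1), I would consider the locus
\[ U = \{\,M\in\Pic^1(X) : H^*\bigl((E\otimes M)\boxtimes F_0(-\Delta)\bigr)=0\,\}. \]
The family $\pr_1^*E\otimes\pr_2^*F_0\otimes\ko_{X\times X}(-\Delta)$, twisted by the universal degree-$1$ bundle, is flat over $\Pic^1(X)$, so by upper-semicontinuity of cohomology $U$ is open; it is nonempty since $M_0\in U$, hence dense because $\Pic^1(X)$ is irreducible. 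Thus $(E\otimes M)\ndorth F_0$ for general $M$, and combining \hyref{Proposition}{prop:delta-orthogonality} with \hyref{Proposition}{prop:FM-restriction} delivers (2).

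For $(2)\implies(3)$ the geometric input is the Poincar\'e formula: the curve $(-X)_M$, being a translate of the Abel--Jacobi image of $X$, has class $[(-X)_M]=\theta^{g-1}/(g-1)!$ in $\Pic$. Consequently, for any coherent sheaf $\cF$ on $\Pic$ the degree of its restriction, $\deg(\cF|_{(-X)_M}) = c_1(\cF)\cdot\theta^{g-1}/(g-1)!$, is a fixed positive multiple of the numerator of the theta-slope $\mu_\theta(\cF)$; hence $\mu_\theta(\cF)$ and $\mu(\cF|_{(-X)_M})$ agree up to the positive factor $(g-1)!$. Assuming $\shat{E}^i$ were not $\mu_\theta$-semistable, I would take its maximal destabilising subsheaf $\cG\subset\shat{E}^i$, so that $\mu_\theta(\cG)>\mu_\theta(\shat{E}^i)$. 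Restricting to a general $(-X)_M$ --- for which \hyref{Proposition}{prop:FM-restriction} makes the restriction an honest, exact operation --- would then yield an inclusion $\cG|_{(-X)_M}\hookrightarrow\shat{E}^i|_{(-X)_M}$ of the expected ranks with $\mu(\cG|_{(-X)_M})>\mu(\shat{E}^i|_{(-X)_M})$, contradicting the semistability asserted in (2); applying this to each of $\shat{E}^0$ and $\shat{E}^1$ gives (3).

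The step I expect to be the main obstacle is precisely this last restriction argument: I must guarantee that a general member of the covering family $\{(-X)_M\}_{M\in\Pic^1(X)}$ actually detects the destabilising subsheaf, i.e.\ that $\cG$ restricts injectively and with unchanged rank and that the strict slope inequality survives. This is a Mehta--Ramanathan-type genericity statement, and it should work here because the curves $(-X)_M$ sweep out all of $\Pic$ while all sharing the single class $\theta^{g-1}/(g-1)!$, so a general one avoids the finitely many bad loci --- the associated points of the torsion and cokernel of $\cG\subset\shat{E}^i$, and the non-locally-free loci. Pinning down this genericity cleanly, rather than the essentially formal homological manipulations in the other steps, is where the real work lies.
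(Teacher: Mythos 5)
Your proposal is correct and follows essentially the same route as the paper: the equivalence $(1)\iff(2)$ is obtained from Propositions \ref{prop:delta-orthogonality} and \ref{prop:FM-restriction} (noting the harmless twist by $\ko_X(P_0)$), and $(2)\implies(3)$ by the standard argument that a destabilising subsheaf of $\shat{E}^i$ would destabilise its restriction to a general curve $(-X)_M$. The details you add --- the semicontinuity argument upgrading ``some $M$'' to ``general $M$'', and the Poincar\'e-formula and genericity bookkeeping for the restriction step --- are precisely the points the paper's one-paragraph proof treats as implicit or standard, and your handling of them is sound.
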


\begin{proof}
The proof of the equivalence $(1) \iff (2)$ follows from Propositions
\ref{prop:delta-orthogonality} and \ref{prop:FM-restriction}, having in mind that twisting with the line bundle $\ko_X(P_0)$ does not affect semistability. The implication $(2) \implies (3)$ is standard: given a sheaf $F$ on a variety $Y$ such that $F|_C$ is semistable, where $C\subset Y$ is a curve cut out by divisors in the linear system $|H|$ of the polarisation $H$, then $F$ is $\mu_H$-semistable on $Y$ --- a destabilising subsheaf $F'\subset F$ would induce a destabilising subsheaf $F'|_C \subset F|_C$.
\end{proof}

\begin{remark}
Indeed, the implications $(2) \implies (3) \Longleftarrow (2')$ were also
used in \cite{EL} as a main tool. Using orthogonality we can not detect
whether a semistable sheaf is stable.
\end{remark}

\subsection{A first example}

\begin{lemma} \label{lem:orth-pair}
Let $E$ be a semistable coherent sheaf on $X$ of degree 0.
Then there exists a coherent sheaf $F \ne 0$ such that $E \ndorth F$.
\end{lemma}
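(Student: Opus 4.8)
The plan is to exhibit the required $F$ via the orthogonality criterion, i.e.\ to verify condition~(2) of \hyref{Proposition}{prop:delta-orthogonality} for $E$. Concretely, I will show that $\Fm^0(E)=0$ while $\Fm^1(E)$ is a semistable bundle. Since $E$ is semistable of slope $0$, for every point $P\in X$ the twist $E(-P)\coloneqq E\otimes\ko_X(-P)$ is semistable of slope $-1<0$, so it has no nonzero global sections: a section would saturate to a line subbundle of non-negative degree, contradicting semistability. Now the derived fibre of $\Fm(E)$ over $P$ is $\RR\Gamma(X,E(-P))$, because $\ko_{X\times X}(-\Delta)$ restricts to $\ko_X(-P)$ on the fibre $X\times\{P\}$ of $\pr_2$. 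Hence $h^0(E(-P))=0$, and Riemann--Roch gives $h^1(E(-P))=g\,\rank(E)$, for all $P$. Constancy of $h^1$ shows, by cohomology and base change, that $\Fm^1(E)$ is locally free of rank $g\,\rank(E)$ and that $\Fm^0(E)=0$.

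Next I would push the diagonal sequence $0\to\ko_{X\times X}(-\Delta)\to\ko_{X\times X}\to\ko_\Delta\to 0$ (tensored with $\pr_1^*E$) forward along $\pr_2$, obtaining the four-term exact sequence
\[
 0 \to \Fm^0(E) \to H^0(E)\otimes\ko_X \xrightarrow{\ \mathrm{ev}\ } E \to \Fm^1(E) \to H^1(E)\otimes\ko_X \to 0 ,
\]
where $\mathrm{ev}$ is evaluation. With $\Fm^0(E)=0$ this breaks into $0\to H^0(E)\otimes\ko_X\to E\to Q\to 0$ and $0\to Q\to\Fm^1(E)\to H^1(E)\otimes\ko_X\to 0$, where $Q=\coker(\mathrm{ev})$ embeds in the locally free sheaf $\Fm^1(E)$ and is therefore torsion-free. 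The main point is the semistability of $\Fm^1(E)$, which I would deduce from its being an extension of two semistable slope-$0$ sheaves. Indeed $Q$ has degree $0$, and any further quotient of $Q$ is a quotient of the semistable sheaf $E$ of slope $0$, hence has slope $\ge 0=\mu(Q)$; thus $Q$ is semistable of slope $0$. The sheaf $H^1(E)\otimes\ko_X$ is trivial, hence semistable of slope $0$. As semistable sheaves of a fixed slope are closed under extension, $\Fm^1(E)$ is semistable of slope $0$.

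Finally, with $\Fm^0(E)=0$ and $\Fm^1(E)$ semistable and nonzero, \hyref{Theorem}{thm:Faltings} supplies a sheaf $F\ne 0$ with $H^*(\Fm^1(E)\otimes F)=0$; and $H^*(\Fm^0(E)\otimes F)=0$ holds trivially. By \hyref{Lemma}{lem:ortho1}(i) this gives $E\ndorth F$, as desired. (Equivalently, this is condition~(2) of \hyref{Proposition}{prop:delta-orthogonality} in the degenerate case where one of the two sheaves vanishes.) The only genuine obstacle is the semistability of $\Fm^1(E)$; everything else is Riemann--Roch and base change. What makes that step go through is recognizing the extension structure coming from the diagonal sequence, together with the elementary fact that same-slope semistable sheaves form an extension-closed class.
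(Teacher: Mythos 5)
Your proof is correct, but it takes a genuinely different route from the paper's. The paper argues directly on $X\times X$: by Faltings' theorem it picks $F$ with $H^*(E\otimes F)=0$, observes that $F$ may be chosen generically in its moduli space so that additionally $H^*(F)=0$, and then the diagonal sequence tensored with $E\boxtimes F$ together with K\"unneth kills $H^*(E\boxtimes F(-\Delta))$ in one stroke --- short, but it leans on a genericity claim (that both vanishing conditions are open and simultaneously achievable in the moduli space of bundles of slope $g-1$). You instead instantiate the $(2)\Rightarrow(1)$ direction of \hyref{Proposition}{prop:delta-orthogonality} concretely: base change along $\pr_2$ gives $\Fm^0(E)=0$ and $\Fm^1(E)$ locally free of rank $g\cdot\rank(E)$, the pushed-forward diagonal sequence exhibits $\Fm^1(E)$ as an extension of $H^1(E)\otimes\ko_X$ by the slope-$0$ semistable quotient $Q=\coker(\mathrm{ev})$, and extension-closedness of same-slope semistables gives semistability of $\Fm^1(E)$; only then do you invoke Faltings, applied to $\Fm^1(E)$ rather than to $E$. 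All steps check out (including torsion-freeness of $Q$ via its embedding into the locally free $\Fm^1(E)$, and the quotient criterion for its semistability). What your route buys: it avoids the genericity step entirely, and it proves \emph{unconditionally} that $\Fm^1(E\otimes M)$ is a semistable bundle of rank $g\cdot\rank(E)$ and slope $0$ --- which is exactly the structural content that \hyref{Corollary}{cor:res1} extracts, so your argument in effect merges the lemma with its application. What it costs: more machinery (cohomology and base change) and length. One pedantic caveat: your assertion that $\Fm^1(E)$ is nonzero fails for $g=0$, where it has rank $g\cdot\rank(E)=0$; but then $\Fm(E)=0$ and $E\ndorth F$ holds for every $F\ne 0$ trivially by \hyref{Lemma}{lem:ortho1}, so the conclusion survives.
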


\begin{proof}
By \hyref{Faltings' Theorem}{thm:Faltings}, there exists a vector bundle $F$ with $H^*(E \otimes F)=0$. Since $F$ can be taken in an open subset in the moduli space of rank $R$ and degree $R(g-1)$ vector bundles, we may furthermore assume that  $H^*(F)=0$.
Tensoring the short exact ideal sheaf sequence of $\Delta$ on $X \times X$ with $E \boxtimes F=\pr_1^*E \otimes \pr_2^*F$, we obtain
\[ 0 \to E \boxtimes F(-\Delta) \to E \boxtimes F \to E\boxtimes F|_\Delta \to 0 .\]
We have $H^*(E \boxtimes F) = H^*(E) \otimes H^*(F)=0$ by the K\"unneth formula, and $H^*(E\boxtimes F|_\Delta) = H^*(E \otimes F) =0$ by our assumption. Hence $H^*(E \boxtimes F(-\Delta))=0$, i.e.\ $E\ndorth F$.
\end{proof}

\begin{corollary} \label{cor:res1}
Let $E$ be a semistable vector bundle on $X$ with $\mu(E)=-1$.
Then $\shat{E}^0 = 0$ and $\shat{E}^1$ is a vector bundle of rank $g \cdot \rank(E)$ which is also semistable. 
Moreover, the restriction of $\shat{E}^1$ to any curve $(-X)_M$ is semistable.
\end{corollary}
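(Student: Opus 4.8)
The plan is to separate the structural statements about $\shat{E}$ from the semistability statements, and to handle the word \emph{any} at the end by exploiting that $\shat{E}$ is a genuine vector bundle.

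First I would compute the fibres of the Picard complex. The fibre of $\FM_\kp(E)$ at $[L]\in\Pic$ is $\RR\Gamma(X, E\otimes L)$; since tensoring by the degree-$0$ bundle $L$ preserves semistability and slope, $E\otimes L$ is semistable of slope $\mu(E)=-1<0$ and hence has no sections. Thus $H^0(E\otimes L)=0$ for every $[L]$, and cohomology-and-base-change for the one-dimensional fibration $\pr_P$ gives simultaneously $\shat{E}^0=0$ and local freeness of $\shat{E}^1=R^1\pr_{P*}(\kp\otimes\pr_X^*E)$. Riemann--Roch computes its rank as $h^1(E\otimes L)=-\chi(E\otimes L)=g\cdot\rank(E)$.

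Next I would establish semistability on the curves $(-X)_M$. For any degree-$1$ line bundle $M$ the twist $E\otimes M$ is semistable of degree $0$, so \hyref{Lemma}{lem:orth-pair} yields a nonzero $F$ with $(E\otimes M)\ndorth F$; by \hyref{Proposition}{prop:delta-orthogonality} the sheaves $\Fm^0(E\otimes M)$ and $\Fm^1(E\otimes M)$ are then semistable of the same slope. The same fibre computation as above (the fibre of $\Fm(E\otimes M)$ over $Q$ being $\RR\Gamma(X,(E\otimes M)(-Q))$, of slope $-1$) shows $\Fm^0(E\otimes M)=0$, so $\Fm^1(E\otimes M)$ is semistable. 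The $\mu$-semistability of $\shat{E}^1$ on $\Pic$ with respect to the theta divisor is then immediate from the implication $(1)\Rightarrow(3)$ of \hyref{Proposition}{prop:generic-Picard-semistability}.

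The delicate point is the claim for \emph{every} curve $(-X)_M$, whereas \hyref{Proposition}{prop:FM-restriction} only identifies $\shat{E}^1|_{(-X)_M}$ with $\Fm^1(E\otimes M)\otimes\ko_X(P_0)$ for \emph{general} $M$. Here I would use that, because $\shat{E}^0=0$, the Picard complex $\FM_\kp(E)=\shat{E}^1[-1]$ is a shifted vector bundle; hence $\otimes^\LL\ko_{(-X)_M}$ involves no higher Tor, and the derived isomorphism of \hyref{Proposition}{prop:FM-restriction}, which is valid for all $M$, collapses to an isomorphism of sheaves $\shat{E}^1|_{(-X)_M}\cong\Fm^1(E\otimes M)\otimes\ko_X(P_0)$ for every $M$. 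Combined with the semistability of $\Fm^1(E\otimes M)$, this gives the restriction statement. I expect this Tor-vanishing observation to be the main obstacle, since it is exactly what removes the genericity hypothesis otherwise forced by the derived nature of the Fourier--Mukai transform.
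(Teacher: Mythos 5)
Your proposal is correct and takes essentially the same route as the paper's proof: twisting to the semistable degree-$0$ bundle $E\otimes M$, applying Lemma~\ref{lem:orth-pair} together with Proposition~\ref{prop:generic-Picard-semistability}, and upgrading the restriction statement from general to arbitrary $M$ via the observation that $\FM_\kp(E)=\shat{E}^1[-1]$ is a shifted vector bundle, so the derived restriction has no higher Tor. Your explicit Tor-vanishing argument simply spells out what the paper compresses into its final sentence about the base change argument simplifying.
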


\begin{proof}
Since $E$ is semistable of negative degree, we have $\shat{E}^0 =0$, and it follows that $\shat{E}^1$ is a vector bundle of the given rank.
Now for any line bundle $M$ of degree 1, the tensor product $E \otimes M$ is semistable of degree 0.
Thus by \hyref{Lemma}{lem:orth-pair}, there exists a sheaf $F\neq0$ such that $(E \otimes M) \ndorth F$, and \hyref{Proposition}{prop:generic-Picard-semistability} shows the semistability of $\shat{E}^1$.

Finally, $\shat{E}^1$ is semistable when restricted to any curve $(-X)_M$ because the base change argument from the proof of \hyref{Proposition}{prop:FM-restriction} simplifies, with $\FM_\kp(E) = \shat{E}^1[-1]$ a shifted vector bundle.
\end{proof}

\subsection{A more subtle example}

\begin{lemma}\label{orth-pair2}
Assume that the genus $g \ne 1$.
Let $E$ be a vector bundle on $X$ of slope $\mu(E)=g-1$. Then the following three conditions are equivalent:
\begin{enumerate}[itemindent=1em,labelsep=1em]
\item $H^*(E)=0$.
\item $E \ndorth \ko_X$.
\item There exists a coherent sheaf $F \ne 0$ such that $E \ndorth F$.
\end{enumerate}
\end{lemma}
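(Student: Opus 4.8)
The plan is to establish the cycle $(1)\Rightarrow(2)\Rightarrow(3)\Rightarrow(1)$, with essentially all the work in the last implication. For $(1)\Rightarrow(2)$ I would feed $F=\ko_X$ into the diagonal sequence: tensoring the ideal sequence $0\to\ko_{X\times X}(-\Delta)\to\ko_{X\times X}\to\ko_\Delta\to0$ with $\pr_1^*E$ produces $0\to E\boxtimes\ko_X(-\Delta)\to\pr_1^*E\to E\to0$, the last term being the restriction to $\Delta\cong X$. By the Künneth formula $H^*(\pr_1^*E)\cong H^*(E)\otimes H^*(\ko_X)$, which vanishes as soon as $H^*(E)=0$, and the diagonal term is just $H^*(E)=0$; the long exact sequence then gives $H^*(E\boxtimes\ko_X(-\Delta))=0$, i.e.\ $E\ndorth\ko_X$. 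This is exactly the mechanism of \hyref{Lemma}{lem:orth-pair}. The implication $(2)\Rightarrow(3)$ is immediate with the witness $F=\ko_X\ne0$.

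The content is in $(3)\Rightarrow(1)$. Given $E\ndorth F$ with $F\ne0$, I would run the same diagonal sequence tensored with $E\boxtimes F$, namely $0\to E\boxtimes F(-\Delta)\to E\boxtimes F\to E\otimes F\to0$. Orthogonality makes the left term acyclic, so the long exact sequence yields a degree-preserving isomorphism $H^*(E\boxtimes F)\cong H^*(E\otimes F)$, and Künneth rewrites the source as $H^*(E)\otimes H^*(F)$. The decisive step is to extract the \emph{top} graded piece: because $X$ is a curve, the degree-$2$ component of $H^*(E)\otimes H^*(F)$ is precisely $H^1(E)\otimes H^1(F)$, while $H^2(E\otimes F)=0$ as $E\otimes F$ is a sheaf on a curve. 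Hence $H^1(E)\otimes H^1(F)=0$, so at least one of $H^1(E)$, $H^1(F)$ vanishes.

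It then remains to exclude the possibility $H^1(F)=0$. By \hyref{Lemma}{lem:numORTH}(ii) the slope $\mu(E)=g-1$ forces $\mu(F)=0$, hence $\deg(F)=0$, and Riemann--Roch gives $\chi(F)=\rank(F)(1-g)$. For $g\ge2$ this is strictly negative, so $h^1(F)\ge-\chi(F)>0$ and $H^1(F)=0$ is impossible. Therefore $H^1(E)=0$, and since $\mu(E)=g-1$ makes $\chi(E)=0$, this upgrades to $h^0(E)=h^1(E)=0$, i.e.\ $H^*(E)=0$, which is $(1)$.

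I expect the main obstacle to be conceptual rather than computational: realising that the useful consequence of orthogonality lives in top cohomological degree. The degree-$0$ and degree-$1$ parts of the Künneth isomorphism entangle $H^0$ and $H^1$ of both bundles and do not isolate $E$, whereas the degree-$2$ part cleanly splits off $H^1(E)\otimes H^1(F)$. The hypothesis $g\ne1$ enters exactly at the final numeric step: it guarantees $\chi(F)\ne0$, which vanishes identically when $g=1$, and so breaks the symmetry between $E$ and $F$, pinning the vanishing onto $E$. Note that the \emph{sign} $\chi(F)<0$, and hence $g\ge2$, is what actually drives the argument; the genus-zero Jacobian is the degenerate extreme and would need separate, elementary treatment on $\IP^1$.
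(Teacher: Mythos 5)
Your implications $(1)\Rightarrow(2)\Rightarrow(3)$ coincide with the paper's, but for the substantive implication $(3)\Rightarrow(1)$ you take a genuinely different and noticeably shorter route, and it is correct for $g\ge2$. The paper does not argue with the given $F$ directly: it first deduces that $E$ and $F$ are semistable with $\deg(F)=0$, passes to $F^{\oplus N}$ for $N\gg0$ so that the theta divisor $\Theta_E$ on the corresponding moduli space of degree-zero semistable bundles is effective, and uses semicontinuity to find a generic $F'$ enjoying the two simultaneous properties $E\ndorth F'$ and $H^*(E\otimes F')=0$; only then does the diagonal sequence give $H^*(E)\otimes H^*(F')=0$ in all degrees, and $\chi(F')\ne0$ (this is where $g\ne1$ enters) forces $H^*(E)=0$. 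You bypass the moduli-theoretic perturbation altogether: the diagonal sequence for the \emph{original} pair already gives the degree-preserving isomorphism $H^*(E\boxtimes F)\cong H^*(E\otimes F)$, whose top graded piece reads $H^1(E)\otimes H^1(F)\cong H^2(E\otimes F)=0$; since $\mu(F)=0$ by \hyref{Lemma}{lem:numORTH}(ii), Riemann--Roch gives $\chi(F)=\rank(F)(1-g)<0$ for $g\ge2$, whence $h^1(F)>0$, so $H^1(E)=0$, and $\chi(E)=0$ upgrades this to $H^*(E)=0$. Your observation that only the top K\"unneth component cleanly decouples $E$ from $F$ is exactly what makes the generic deformation unnecessary; what the paper's longer argument buys in exchange is the full vanishing $H^*(E)\otimes H^*(F')=0$, which your top-degree trick simply does not need. (Both you and the paper tacitly treat $F$ as locally free when invoking slopes; torsion in $F$ is easily excluded, since the fibres of $\Fm^i(E)$ compute $H^i(E(-p))$ and $\chi(E(-p))=-\rank(E)\ne0$.)

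Concerning $g=0$, which the hypothesis $g\ne1$ nominally includes and which you defer to a ``separate elementary treatment'': no such treatment can succeed, because the statement is false on $\IP^1$. There $\ko_{X\times X}(-\Delta)\cong\ko(-1)\boxtimes\ko(-1)$, so $H^*(E\boxtimes\ko_X(-\Delta))\cong H^*(E(-1))\otimes H^*(\ko_{\IP^1}(-1))=0$ for \emph{every} bundle $E$; hence $(2)$ and $(3)$ hold for all $E$ of slope $-1$, while $(1)$ fails, e.g.\ for $E=\ko\oplus\ko(-2)$. The paper's own proof breaks at the same place for $g=0$: orthogonality no longer forces semistability, since $\Fm^0(\ko_X)=\Fm^1(\ko_X)=0$ --- precisely the failure mode flagged in the Remark following \hyref{Proposition}{prop:delta-orthogonality}. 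So your restriction to $g\ge2$ is not a defect relative to the paper; it is exactly the range in which the lemma holds.
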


\begin{proof}
$(1) \implies (2)$:
From $H^*(E)=0$ we get $H^*(E\boxtimes\ko_X) = H^*(E) \otimes H^*(\ko_X) = 0$ and $H^*(E\otimes\ko_X|_\Delta) = H^*(E) = 0$.
Therefore, the long exact cohomology sequence of $0\to E\boxtimes\ko_X(-\Delta) \to E\boxtimes\ko_X \to E\otimes\ko_X|_\Delta \to 0$ gives $E\boxtimes\ko_X(-\Delta)) = 0$, i.e.\ $E\ndorth \ko_X$.

$(2) \implies (3)$: This implication is obvious.

$(3) \implies (1)$:
If we have such an orthogonality, then $E$ and $F$ are semistable. Note that $\deg(F)=0$ by \hyref{Lemma}{lem:numORTH}(ii).
Moreover, $E$ is also orthogonal to the direct sums $F^{\oplus N}$ for all $N>0$, which are semistable sheaves of rank $N \cdot \rank(F)$ and degree 0.
We may take $N$ big enough such that the theta divisor $\Theta_E$ in the moduli space of semistable degree 0 bundles on $X$ of rank $N \cdot \rank(F)$ is effective. By semicontinuity, being orthogonal to $E$ is an open condition. Thus, we may assume there exists an $F'$ outside $\Theta_E$, i.e.\ $E \ndorth F'$ and $H^*(E \otimes F') = 0$.
Again we consider the long exact cohomology sequence of $0\to E\boxtimes F'(-\Delta) \to E\boxtimes F' \to E\otimes F'|_\Delta \to 0$; here it yields $H^*(E) \otimes H^*(F') =0$.
We have $\chi(F') \ne 0$ from Riemann--Roch, if $g\ne 1$. So $H^*(F') \ne 0$, hence $H^*(E)=0$.
\end{proof}

\begin{corollary}
Let $E$ be a semistable sheaf of slope $g-2$.
If there exists a line bundle $M$ of degree 1 with $H^*(E \otimes M)=0$, then $\shat{E}^0=0$, and $\shat{E}^1$ is semistable of rank $\rank(E)$.
\end{corollary}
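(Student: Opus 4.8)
The plan is to combine the orthogonality criterion for semistability with a direct cohomology-and-base-change analysis of the two Picard sheaves. Write $G \coloneqq E \otimes M$. Since $E$ is semistable and $M$ is a line bundle of degree~$1$, the bundle $G$ is semistable of slope $\mu(G) = g-1$, and by hypothesis $H^*(G) = 0$. First I would feed this into the orthogonality machinery: by \hyref{Lemma}{orth-pair2}, the vanishing $H^*(G)=0$ gives $G \ndorth \ko_X$, so taking $F = \ko_X \ne 0$ verifies condition~(1) of \hyref{Proposition}{prop:generic-Picard-semistability} (with the degree-$1$ line bundle $M$). The proposition then delivers condition~(3), namely that both $\shat{E}^0$ and $\shat{E}^1$ are $\mu$-semistable with respect to the theta divisor on $\Pic$.

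It remains to show $\shat{E}^0 = 0$ and that $\shat{E}^1$ has rank $\rank(E)$; the semistability of $\shat{E}^1$ then follows from the first step. For the vanishing, the key device is a point-twisting trick: for every $Q \in X$ the inclusion $\ko_X(-Q) \into \ko_X$ induces $G(-Q) \into G$, whence $H^0(E \otimes M(-Q)) = H^0(G(-Q)) \subseteq H^0(G) = 0$. As $Q$ ranges over $X$, the degree-$0$ line bundles $M(-Q)$ sweep out the curve $(-X)_M \subset \Pic$, so the upper-semicontinuity locus $Z = \{L \in \Pic : H^0(E \otimes L) \ne 0\}$ is disjoint from $(-X)_M$. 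Being a closed subset that misses an entire curve of the irreducible variety $\Pic$, the set $Z$ is proper, hence avoids the generic point, so $h^0(E \otimes L) = 0$ for general $L$. Since $\kp \otimes \pr_X^* E$ is locally free and $\pr_P$ is flat and proper, the pushforward $\shat{E}^0 = \pr_{P*}(\kp \otimes \pr_X^* E)$ is torsion-free (a torsion section would pull back to a torsion section of a locally free sheaf, forcing it to vanish), and its generic rank equals the generic value of $h^0(E \otimes L)$, which is $0$. A torsion-free sheaf of generic rank $0$ is zero, so $\shat{E}^0 = 0$.

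For the rank of $\shat{E}^1$ I would invoke Riemann--Roch: for $L \in \Pic$ we have $\chi(E \otimes L) = \rank(E)(\mu(E)+1-g) = -\rank(E)$, so the generic vanishing of $h^0$ forces $h^1(E \otimes L) = \rank(E)$ for general $L$. Because the fibres of $\pr_P$ are curves, base change holds in the top cohomological degree, giving $\shat{E}^1 \otimes k(L) \cong H^1(E \otimes L)$ and hence $\rank(\shat{E}^1) = \rank(E)$. Together with $\shat{E}^0 = 0$ and the $\mu$-semistability from the first step, this yields the corollary. (In the excluded case $g=1$ one does not actually need \hyref{Lemma}{orth-pair2}: the implication $H^*(G)=0 \Rightarrow G \ndorth \ko_X$ follows directly from the K\"unneth formula applied to the ideal sequence of $\Delta$, exactly as in the proof of that lemma.)

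I expect the main obstacle to be the vanishing $\shat{E}^0 = 0$, not the semistability, which is essentially automatic once condition~(1) of \hyref{Proposition}{prop:generic-Picard-semistability} holds. Two points require care: converting the single degree-$1$ vanishing $H^*(E \otimes M)=0$ into \emph{generic} vanishing of $h^0(E \otimes L)$ over $\Pic$ (handled by the subsheaf inclusions $G(-Q) \into G$ together with semicontinuity), and upgrading generic vanishing to honest vanishing. Since cohomology and base change for the non-top pushforward $R^0\pr_{P*}$ is delicate, I would deliberately route this last step through torsion-freeness of the pushforward rather than through a fibrewise base-change argument.
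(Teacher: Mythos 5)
Your proof is correct and follows exactly the route the paper intends: the corollary is stated there without proof, as an immediate consequence of \hyref{Lemma}{orth-pair2} (yielding $(E\otimes M)\ndorth\ko_X$ from $H^*(E\otimes M)=0$) and \hyref{Proposition}{prop:generic-Picard-semistability} (yielding semistability of the Picard sheaves), which is precisely your first paragraph. The remaining details you supply --- vanishing of $h^0(E\otimes L)$ along $(-X)_M$ plus semicontinuity, torsion-freeness of the pushforward to upgrade generic vanishing to $\shat{E}^0=0$, and top-degree base change with Riemann--Roch to get $\rank(\shat{E}^1)=\rank(E)$ --- are the standard facts the authors leave implicit, and your caveat about the $g\ne1$ hypothesis of \hyref{Lemma}{orth-pair2} (only needed for the implication the corollary does not use) is a correct extra precaution.
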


\begin{remark}
Raynaud proved in \cite{Raynaud} the existence of stable sheaves $E$ having integral slope with the following property: $H^*(E \otimes M) \ne 0$ for all line bundles $M$. These base points of the theta divisor form a proper closed subset of the moduli space. Thus, we can only hope that the Picard sheaves $\shat{E}$ are semistable for general semistable sheaves $E$.
\end{remark}

\section{A classical proof for the stability of $\shat{E}$}

\noindent
We take $E$ to be a globally generated vector bundle on $X$. Thus, we have
\[ 0 \to \Fm^0( E) \to H^0(E) \otimes \ko_X \to E \to 0     \qquad  \text{ and } \qquad
   \Fm^1(E) \cong H^1(E) \otimes \ko_X . \]
Note that a semistable bundle of slope $>2g-1$ is globally generated.

For a subsheaf $E' \subset E$ we obtain an injective map $H^0(E') \into H^0(E)$.
From both morphisms we eventually obtain a subsheaf $\Fm^0(E') \subset \Fm^0(E)$.
The next result tells us, that these subsheaves are enough to test
(semi)stability.

\begin{lemma}\label{lem:good1}
The sheaf $\Fm^0(E)$ is (semi)stable if for all globally generated subsheaves $E' \subset E$ we have the
inequality $\mu(\Fm^0(E')) \sleq \mu(\Fm^0(E) )$.
\end{lemma}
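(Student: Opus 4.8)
The plan is to prove the contrapositive by the standard reduction to subsheaves arising from sub-linear-systems. Since $\Fm^0(E)$ sits inside the trivial bundle $H^0(E)\otimes\ko_X$ with locally free quotient $E$, saturation costs nothing: given any subsheaf $S\subseteq\Fm^0(E)$, its saturation inside $H^0(E)\otimes\ko_X$ still lies in $\Fm^0(E)$ because $E$ is torsion-free, has the same rank, and has slope $\geq\mu(S)$. So I would assume from the outset that $S$ is a subbundle (and, in the stable case, proper). It then suffices to manufacture a globally generated $E'\subseteq E$ with $\mu(\Fm^0(E'))\geq\mu(S)$: together with the hypothesis $\mu(\Fm^0(E'))\sleq\mu(\Fm^0(E))$ this gives $\mu(S)\sleq\mu(\Fm^0(E))$, which is exactly the required (semi)stability.

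To build $E'$ from $S$, let $W\subseteq H^0(E)$ be the smallest subspace with $S\subseteq W\otimes\ko_X$ (the span of the generic fibre of $S$), and put $E':=\text{image}(W\otimes\ko_X\to E)$, which is globally generated. The evaluation $W\otimes\ko_X\to E$ annihilates $S$ and factors through $E'$, so $S\subseteq\ker(W\otimes\ko_X\to E')=(W\otimes\ko_X)\cap\Fm^0(E)$, and a fortiori $S\subseteq\Fm^0(E')=(H^0(E')\otimes\ko_X)\cap\Fm^0(E)$. Thus $S$ is contained in the special subsheaf $\Fm^0(E')$, and the whole issue is whether passing from $S$ to $\Fm^0(E')$ decreases the slope.

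I would compare slopes along $S\subseteq K\subseteq\Fm^0(E')$, where $K:=(W\otimes\ko_X)\cap\Fm^0(E)$. The inclusion $K\subseteq\Fm^0(E')$ is harmless: both have degree $-\deg E'$, while $\rank\Fm^0(E')=h^0(E')-\rank E'\geq\dim W-\rank E'=\rank K$; since the degree is negative, enlarging the rank only raises the slope, so $\mu(\Fm^0(E'))\geq\mu(K)$. Everything therefore collapses to the first inclusion, i.e. to proving
\[ \mu(S)\leq\mu(K)\quad\text{for the two full-span subbundles }S\subseteq K\text{ of }W\otimes\ko_X. \]
This is the step I expect to be the main obstacle. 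It is genuinely delicate, because a priori the span construction can overshoot in rank ($S\subsetneq K$ with $K$ carrying lower slope), so the inequality is \emph{not} a formal consequence of $S\subseteq K$. To close it I would exploit the rigidity of subbundles of a trivial bundle: a subbundle of $W\otimes\ko_X$ whose span is all of $W$ has its slope controlled by its rank (dually, this is a statement about minimally generated globally generated quotients), and among such subbundles the slope increases with the rank. The clean way to package this is to take $S$ to be the \emph{maximal} destabilizing subsheaf of $\Fm^0(E)$ — hence semistable — and to argue that it must then coincide with $K=(W\otimes\ko_X)\cap\Fm^0(E)$, so that $S=\Fm^0(E')$ up to the harmless enlargement above. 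The strictness needed for stability follows by noting that a proper $S$ yields $E'\neq E$, so the hypothesis supplies the strict inequality $\mu(\Fm^0(E'))<\mu(\Fm^0(E))$.
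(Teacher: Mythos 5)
Your reduction to subbundles, the span construction, and the comparison $\mu(K)\leq\mu(\Fm^0(E'))$ are all sound (the injection $W\into H^0(E')$ that the rank count silently uses does hold: since $H^0(\Fm^0(E))=0$, a nonzero $w\in W$ with zero image in $E'$ would give a trivial subsheaf $\ko_X\cdot w\subseteq K\subseteq\Fm^0(E)$, a contradiction). But the step you yourself flag as the main obstacle --- $\mu(S)\leq\mu(K)$ --- is a genuine gap, and the closure you sketch does not work. The heuristic that ``among full-span subbundles of $W\otimes\ko_X$ the slope increases with the rank'' is false: on $\IP^1$, inside $\ko_X^{\oplus3}$, the line subbundle given by three independent quadrics with no common zero (say $x^2,xy,y^2$) is full-span of rank $1$ and slope $-2$, while the kernel of a generic surjection $\ko_X^{\oplus3}\onto\ko_X(5)$ is a full-span subbundle of rank $2$ and slope $-5/2<-2$. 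And the packaging via the maximal destabilizing subsheaf is circular: maximality of $S$ only gives $\mu(K)\leq\mu(S)$, whereas to force $S=K$ you would need the opposite inequality $\mu(K)\geq\mu(S)$ --- which is exactly the inequality you are trying to establish. As it stands, nothing in your argument rules out $S\subsetneq K$ with $\mu(K)<\mu(S)$, and then the chain $\mu(S)\leq\mu(K)\leq\mu(\Fm^0(E'))$ breaks at its first link.

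The paper's proof shows that the detour through $K$ is unnecessary. Instead of taking $E'$ to be the image of $W\otimes\ko_X$ in $E$, it takes $E'$ to be the cokernel of $S$ itself,
\[ 0 \to S \to W\otimes\ko_X \to E' \to 0 , \]
so that the degree/rank bookkeeping applies to $S$ directly: $\deg(S)=-\deg(E')$ and $\rank(S)=\dim W-\rank(E')$, whence
\[ \mu(S) \;=\; \frac{-\deg(E')}{\dim W-\rank(E')} \;\leq\; \frac{-\deg(E')}{h^0(E')-\rank(E')} \;=\; \mu(\Fm^0(E')) , \]
using only $W\into H^0(E')$ and $\deg(E')\geq0$; no slope comparison between $S$ and $K$ ever enters, because $S$, not $K$, is the kernel of the defining surjection. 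What the paper must know instead is that this cokernel $E'$ embeds into $E$ (which amounts to $S=K$ in your notation), and it reads this off from its commutative diagram of the two short exact sequences; note that your ``image'' definition and the paper's ``cokernel'' definition of $E'$ agree precisely in that case. So you have correctly isolated the delicate point of the lemma, but the rigidity argument you propose to resolve it fails, whereas the paper's arrangement converts the whole comparison into the elementary denominator inequality displayed above.
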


\begin{proof}
We have the short exact sequence
 $0 \to \Fm^0(E) \to H^0(E)\otimes\ko_X \to E \to 0$,
as $E$ is globally generated.
Let $U\subset\Fm^0(E)$ be a subbundle. The inclusion $U\into H^0(E)\otimes\ko_X$ induces a surjection
 $H^0(E)\dual\otimes\ko_X \onto H^0(U\dual)\otimes\ko_X$. Denote by $V\dual\subseteq H^0(U\dual)$ the image of the induced map on global sections, $H^0(E)\dual\to H^0(U\dual)$. The commutative diagram
\[ \xymatrix{
   H^0(E)\dual \otimes \ko_X \ar@{->>}[d] \ar@{->>}[rrd] \\
       V\dual \otimes \ko_X  \ar@{^{(}->}[r] & H^0(U\dual) \otimes \ko_X \ar[r] & U\dual
} \]
shows that $V\dual\otimes\ko_X\to U\dual$ is surjective. We get inclusions
 $U\into V\otimes\ko_X\into H^0(E)\otimes\ko_X$, which combine into a commutative diagram of short exact sequences
\[ \xymatrix{
   0 \ar[r] & U  \ar[d] \ar[r] & V\otimes\ko_X \ar[d] \ar[r] & E' \ar[r] \ar[d] & 0 \\
   0 \ar[r] & \Fm^0(E) \ar[r] & H^0(E)\otimes\ko_X   \ar[r]  & E  \ar[r]        & 0
} \]
where all vertical arrows are injective. Denote $v=\dim(V)$ and $r' = \rank(E')$. Then
\[ \mu(U) = -\frac{\deg(E')}{v-r'} \leq -\frac{\deg(E')}{h^0(E')-r'} = \mu(\Fm^0(E')) , \]
which shows that testing the (semi)stability condition only on subsheaves of the form $\Fm^0(E')$ suffices to deduce it for arbitrary subsheaves.
\end{proof}

\begin{corollary}\label{cor:general1}
If $E$ is a (semi)stable vector bundle of slope $\mu(E) > 2g$, then $\Fm^0(E)$ is (semi)stable.
\end{corollary}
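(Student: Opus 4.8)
The plan is to apply \hyref{Lemma}{lem:good1}. Since $\mu(E) > 2g > 2g-1$, the bundle $E$ is globally generated and semistable, so $H^1(E)=0$ and the defining sequence $0 \to \Fm^0(E) \to H^0(E)\otimes\ko_X \to E \to 0$ is available. Hence $\Fm^0(E)$ has rank $h^0(E)-r = \chi(E)-r = d-rg$ and degree $-d$, giving $\mu(\Fm^0(E)) = -d/(d-rg) = -\mu(E)/(\mu(E)-g)$ (note $\mu(E)-g > g > 0$). By \hyref{Lemma}{lem:good1} it then suffices to verify $\mu(\Fm^0(E')) \sleq \mu(\Fm^0(E))$ for every globally generated subsheaf $E' \subseteq E$, where $\mu(\Fm^0(E')) = -d'/(h^0(E')-r')$ with $r'=\rank(E')$ and $d'=\deg(E')$; subsheaves with $h^0(E')=r'$ give $\Fm^0(E')=0$ and may be ignored.

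Writing $h^0(E') = \chi(E')+h^1(E')$ and clearing the (positive) denominators $\mu(E)-g$ and $h^0(E')-r'$, a short rearrangement shows that the desired inequality $\mu(\Fm^0(E')) \sleq \mu(\Fm^0(E))$ is equivalent to
\[ \mu(E)\cdot h^1(E') \sleq g\, r'\bigl(\mu(E)-\mu(E')\bigr). \]
Everything therefore reduces to bounding $h^1(E') = h^0(\omega_X \otimes (E')\dual)$ from above, which is the crux of the argument.

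To bound $h^1(E')$ I would pass to the Harder--Narasimhan filtration of $E'$ (a vector bundle, being torsion-free on a smooth curve). As $E'$ is globally generated it is a quotient of a trivial bundle, so $\mu_{\min}(E')\geq 0$; and since $E'\subseteq E$ with $E$ semistable, $\mu_{\max}(E')\leq\mu(E)$. Thus every HN factor $G_i$ (semistable of slope $\nu_i$ and rank $\rho_i$) satisfies $0\leq\nu_i\leq\mu(E)$. If $\nu_i > 2g-2$ then $h^1(G_i)=0$; if $0\leq\nu_i\leq 2g-2$ then $\omega_X\otimes G_i\dual$ is semistable of slope $2g-2-\nu_i\in[0,2g-2]$, and Clifford's estimate (\hyref{Proposition}{Clifford}, or \cite[Theorem 2.1]{BGN}) yields $h^1(G_i) = h^0(\omega_X\otimes G_i\dual) \leq \rho_i(g-\nu_i/2)$. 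Subadditivity of $h^1$ along the filtration gives $h^1(E')\leq\sum_{\nu_i\leq 2g-2}\rho_i(g-\nu_i/2)$. The slope hypothesis now enters precisely through $g/\mu(E) < 1/2$: for each factor with $\nu_i\leq 2g-2$ we get $g-\nu_i/2 \leq g - g\nu_i/\mu(E)$, while for the omitted factors with $\nu_i>2g-2$ the left-hand contribution $0$ is at most $g - g\nu_i/\mu(E)$ because $\nu_i\leq\mu(E)$. Summing these termwise gives $h^1(E')\leq g\sum_i\rho_i(1-\nu_i/\mu(E)) = g(r'-d'/\mu(E)) = g\,r'(\mu(E)-\mu(E'))/\mu(E)$, which is exactly the bound required above; this settles semistability.

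For stability, assume $E$ stable and let $U=\Fm^0(E')\neq 0$ be a genuine subsheaf, so $\rank(U)=h^0(E')-r'\geq 1$. If every HN factor had slope $0$ then $E'$ would be semistable of slope $0$ and Clifford would force $h^0(E')\leq r'$, contradicting $\rank(U)\geq 1$; hence some factor has $\nu_i>0$. For that factor the comparison above is strict --- either $g/\mu(E)<1/2$ strictly when $\nu_i\leq 2g-2$, or $\nu_i<\mu(E)$ strictly when $\nu_i>2g-2$ (a proper subsheaf of a stable bundle has slope $<\mu(E)$) --- yielding the strict inequality $\mu(\Fm^0(E'))<\mu(\Fm^0(E))$. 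The main obstacle is thus controlling $h^1(E')$ for a subsheaf $E'$ that need not be semistable; the Harder--Narasimhan filtration combined with the (generalised) Clifford bound resolves it, and the hypothesis $\mu(E)>2g$ is used exactly through $g/\mu(E)<1/2$.
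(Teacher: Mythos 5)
Your proof is correct and follows the paper's overall strategy: reduce via \hyref{Lemma}{lem:good1} to a section bound for globally generated subsheaves, with the hypothesis $\mu(E)>2g$ entering exactly as $g/\mu<1/2$; indeed your inequality $\mu(E)\,h^1(E') \sleq g\,r'(\mu(E)-\mu(E'))$ is precisely the Riemann--Roch rewrite of the inequality $h^0(E')-\rank(E') < (1-g/\mu)\deg(E')$ of \hyref{Corollary}{cor:CT1}, which is what the paper plugs into \hyref{Lemma}{lem:good1}. The implementation differs in three mild but genuine ways. First, the proof of \hyref{Corollary}{cor:CT1} splits the Harder--Narasimhan filtration of $E'$ into just two blocks and applies the generalised Clifford bound (\hyref{Proposition}{Clifford}, for possibly non-semistable bundles) to the quotient block; you work factor by factor through the whole HN filtration together with subadditivity of $h^1$, so you only ever need the semistable Clifford estimate (\cite[Theorem 2.1]{BGN}) --- a more elementary input at the cost of slightly more bookkeeping. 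Second, your termwise comparison uses only $\mu_{\max}(E')\le\mu(E)$, so it yields the semistable case directly, whereas the paper proves the stable case and disposes of semistability with a rather terse Jordan--H\"older reduction. Third, your strictness analysis --- some HN factor has $\nu_i>0$ since $\Fm^0(E')\ne 0$, and for that factor the comparison is strict, via $g/\mu<1/2$ if $\nu_i\le 2g-2$ or via $\nu_i\le\mu_{\max}(E')<\mu(E)$ (proper subsheaf of a stable bundle) otherwise --- is in fact more careful than the paper's: the proof of \hyref{Corollary}{cor:CT1} claims $\mu_{\min}(E')>0$ because $E'$ is not trivial, which does not hold as stated (e.g.\ $E'=\ko_X\oplus L$ with $L$ globally generated of positive degree), although the strict conclusion survives there because strictness then comes from the other block, exactly as in your case split. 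The one blind spot you share with the paper is the tacit assumption $g\ge 1$: for $g=0$ your strict termwise bound $0<g\rho_i(1-\nu_i/\mu)$ degenerates, as does the paper's strict monotonicity of $x\mapsto 1-g/x$; this is harmless, since for $g=0$ the Picard variety is a point and the statement is empty.
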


\begin{proof}
Since $\mu \coloneqq \mu(E) > 2g-2$, we have $h^1(E)=0$ and $h^0(E)=\deg(E)+(1-g)\rank(E)$.
From the short exact sequence
 $0 \to \Fm^0(E) \to H^0(E) \otimes \ko_X \to E \to 0$,
we deduce
\[ \mu(\Fm^0(E)) = \frac{-\deg(E)}{h^0(E)-\rank(E)} = \frac{-\deg(E)}{\chi(E)-\rank(E)}
                 = \frac{-\deg(E)}{\deg(E)-g\cdot\rank(E)} = \frac{-\mu}{\mu-g} . \]
Assume that $E$ is stable.
Let $E' \subsetneq E$ be a globally generated proper subsheaf of $E$.
In \hyref{Corollary}{cor:CT1} (\hyref{Section}{sec:clifford} is independent of the rest of the article), we draw the following consequence from the generalised Clifford theorem:
 $h^0(E')-\rank(E') < \frac{\mu -g}{\mu} \deg(E')$.
Thus
\[    \mu(\Fm^0(E')) = \frac{-\deg(E')}{ h^0(E')-\rank(E')}
   <  \frac{-\mu}{\mu-g } = \mu(\Fm^0(E)) \, . \]
By \hyref{Lemma}{lem:good1}, to show the stability of $\Fm^0(E)$ it suffices to check this inequality for the subsheaves of type $\Fm^0(E')$.
The claim about semistability follows from this by the Jordan--H\"older filtration of $E$.
\end{proof}

\begin{corollary} \label{cor:STAB1}
Let $E$ be a (semi)stable vector bundle of slope $\mu(E) > 2g-1$.
Then the restriction of $\shat{E}^0$ to any curve $(-X)_M$ is (semi)stable.
In particular, $\shat{E}^0$ is (semi)stable.
\end{corollary}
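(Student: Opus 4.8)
The plan is to reduce to \hyref{Corollary}{cor:general1} by twisting, and then to transport the resulting (semi)stability of $\Fm^0$ to $\shat{E}^0$ via the restriction formula of \hyref{Proposition}{prop:FM-restriction}. Fix any line bundle $M$ of degree $1$. Since $\deg(M)=1$, the bundle $E\otimes M$ is again (semi)stable, now of slope $\mu(E\otimes M)=\mu(E)+1>2g$. Hence \hyref{Corollary}{cor:general1} applies and $\Fm^0(E\otimes M)$ is (semi)stable; as twisting by the line bundle $\ko_X(P_0)$ preserves (semi)stability, so is $\Fm^0(E\otimes M)\otimes\ko_X(P_0)$. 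For \emph{general} $M$ this sheaf is exactly $\shat{E}^0|_{(-X)_M}$ by \hyref{Proposition}{prop:FM-restriction}, which already yields the statement for a general curve $(-X)_M$.

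The substance of the word ``any'' in the statement is to promote this identification from general $M$ to arbitrary $M$; for this I would show that $\shat{E}^0$ is locally free and that both relevant $\Fm^1$-terms vanish. Because $\mu(E)>2g-1>2g-2=\deg(\omega_X)$, every twist $E\otimes L$ with $L\in\Pic^0(X)$ is semistable of slope exceeding $2g-2$, so $H^1(E\otimes L)\cong\Hom(E\otimes L,\omega_X)\dual=0$ by Serre duality (a nonzero map to $\omega_X$ would force a quotient of $E\otimes L$ of slope $\le 2g-2$). Thus $\shat{E}^1=0$, the Picard complex $\FM_\kp(E)=\shat{E}^0$ is concentrated in degree $0$ and is a vector bundle. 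Being locally free it is flat, so $\FM_\kp(E)\otimes^\LL\ko_{(-X)_M}$ coincides with the ordinary restriction $\shat{E}^0|_{(-X)_M}$ for \emph{every} $M$. The same vanishing applied fibrewise shows $\Fm^1(E\otimes M)=0$: its fibre over $Q\in X$ is $H^1\bigl((E\otimes M)(-Q)\bigr)$, and $(E\otimes M)(-Q)$ is semistable of slope $\mu(E)>2g-2$. Consequently $\Fm(E\otimes M)=\Fm^0(E\otimes M)$ lives in degree $0$, and the derived isomorphism of \hyref{Proposition}{prop:FM-restriction} descends to an isomorphism of sheaves $\shat{E}^0|_{(-X)_M}\cong\Fm^0(E\otimes M)\otimes\ko_X(P_0)$ valid for all $M$.

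Putting the two paragraphs together, $\shat{E}^0|_{(-X)_M}$ is (semi)stable for every degree-$1$ line bundle $M$. For the final clause I would invoke the standard restriction argument recorded in the proof of \hyref{Proposition}{prop:generic-Picard-semistability}, implication $(2)\implies(3)$: the curves $(-X)_M$ are cut out by members of the theta linear system, and since restriction to such a curve preserves the slope ordering of subsheaves (ranks are unchanged and $H$-degrees become $C$-degrees), a $\mu_H$-destabilising subsheaf of $\shat{E}^0$ on $\Pic$ would restrict to a destabilising subsheaf of the (semi)stable $\shat{E}^0|_{(-X)_M}$. As the latter cannot exist, $\shat{E}^0$ is $\mu$-(semi)stable with respect to the theta divisor; the same slope comparison handles the strict inequalities, so the stable case descends as well.

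I expect the only real obstacle to be the second paragraph, namely the care needed to pass from general $M$ to arbitrary $M$. Everything hinges on the two cohomology vanishings $\shat{E}^1=0$ and $\Fm^1(E\otimes M)=0$, which turn the derived restriction into a naive one; these hold comfortably under $\mu(E)>2g-2$, whereas the binding use of the hypothesis is the strict bound $\mu(E)>2g-1$ needed to place $E\otimes M$ in the range $\mu>2g$ of \hyref{Corollary}{cor:general1}.
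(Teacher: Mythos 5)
Your proposal is correct and follows essentially the same route as the paper: twist by $M$ to reach slope $>2g$, apply \hyref{Corollary}{cor:general1} to get (semi)stability of $\Fm^0(E\otimes M)$, identify this with $\shat{E}^0|_{(-X)_M}$ via \hyref{Proposition}{prop:FM-restriction}, and conclude on $\Pic$ by the restriction argument of \hyref{Proposition}{prop:generic-Picard-semistability}. Your second paragraph merely makes explicit the cohomology-and-base-change vanishings ($\shat{E}^1=0$, $\Fm^1(E\otimes M)=0$) that the paper compresses into the remark that local freeness of $\shat{E}^0$ makes the identification valid for \emph{all} $M$, which is a welcome but not divergent elaboration.
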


\begin{proof} 
We have $\shat{E}^0|_{(-X)_M} \cong \Fm^0(E\otimes M) \otimes \ko_X(P_0)$ from \hyref{Proposition}{prop:FM-restriction}. This holds for all curves $(-X)_M$ because $E$ semistable of slope $>2g-1$ implies that $\shat{E}^0$ is locally free.
Moreover, $E\otimes M$ is a (semi)stable bundle of slope $\mu(E\otimes M) > 2g$, so \hyref{Corollary}{cor:general1} applies and yields the (semi)stability of $\Fm^0(E\otimes M)$ and hence of $\shat{E}^0|_{(-X)_M}$.
\end{proof}

\section{Application of orthogonality}

\noindent
There are two ways how to apply the orthogonality condition $E \ndorth F$, in order to deduce the semistability of the Picard bundle $\shat{F}$ from the semistability of another Picard sheaf $\shat{F}$. Either we use the symmetry of orthogonality, i.e.\ \hyref{Lemma}{lem:sym}(i) and (ii), or  we employ Serre duality, i.e.\ \hyref{Lemma}{lem:sym}(iii).
We start with the latter method.

\begin{corollary} \label{cor:res2}
Let $E$ be a semistable vector bundle on $X$ with $\mu(E)=2g-1$.
Then $\shat{E}^1 = 0$ and $\shat{E}^0$ is a vector bundle of rank $g \cdot \rank(E)$ which is also semistable. 
Moreover, the restriction of $\shat{E}^0$ to any curve $(X)_N$ is semistable.
\end{corollary}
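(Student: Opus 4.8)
The plan is to realise this corollary as the Serre dual of \hyref{Corollary}{cor:res1}: the hypothesis $\mu(E)=2g-1$ is exactly what turns $E$ into the Serre dual of a slope $-1$ bundle, and the mechanism for the transport is \hyref{Lemma}{lem:sym}(iii), which converts the $-\Delta$-orthogonality available in degree $0$ into a $+\Delta$-orthogonality for $E$.

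First I would settle the numerical claims. For every $[L]\in\Pic$ the bundle $E\otimes L$ is semistable of slope $2g-1>2g-2$, so Serre duality on $X$ forces $h^1(E\otimes L)=0$; cohomology and base change then give $\shat{E}^1=0$ and exhibit $\shat{E}^0$ as locally free with fibre $H^0(E\otimes L)$, of rank $\chi(E\otimes L)=g\cdot\rank(E)$ by Riemann--Roch. Since $\FM_\kp(E)=\shat{E}^0$ is now a vector bundle, the derived restriction in \hyref{Proposition}{prop:FM-restriction} is a genuine sheaf in degree $0$ for \emph{every} line bundle $N$ of degree $-1$; thus $\shat{E}^0|_{(X)_N}\cong\Fp^0(E\otimes N)\otimes\ko_X(-P_0)$ with $\Fp^1(E\otimes N)=0$ for all such $N$.

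The core step is the orthogonality transfer. Put $E'\coloneqq\omega_X\otimes E\dual$; dualising and twisting by a line bundle preserve semistability, and $\mu(E')=(2g-2)-\mu(E)=-1$. Fix any line bundle $M$ of degree $1$, so that $E'\otimes M$ is semistable of degree $0$ and \hyref{Lemma}{lem:orth-pair} yields $F\ne 0$ with $(E'\otimes M)\ndorth F$. Applying \hyref{Lemma}{lem:sym}(iii) and simplifying $\omega_X\otimes(E'\otimes M)\dual=E\otimes M\inv$, this is equivalent to $(E\otimes N)\dorth(\omega_X\otimes F\dual)$ with $N\coloneqq M\inv$ of degree $-1$ and $\omega_X\otimes F\dual\ne0$. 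This is precisely condition $(1')$ of \hyref{Proposition}{prop:generic-Picard-semistability}, whose chain $(1')\iff(2')\implies(3)$ delivers the $\mu$-semistability of $\shat{E}^0$ (and vacuously of $\shat{E}^1=0$) with respect to the theta divisor on $\Pic$.

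For the restriction to \emph{any} curve $(X)_N$, I would note that the orthogonality above can be produced for every $N$ of degree $-1$, simply by taking $M=N\inv$; hence \hyref{Proposition}{prop:delta-orthogonality} makes $\Fp^0(E\otimes N)$ semistable for all $N$, where the vanishing $\Fp^1(E\otimes N)=0$ renders the ``same slope'' clause of that proposition vacuous. Combined with the base-change identity of the second paragraph, $\shat{E}^0|_{(X)_N}$ is semistable for every $N$. The steps requiring care are precisely the degenerate bookkeeping and the twist computation: one must check that $\shat{E}^1=0$ both upgrades ``general $N$'' to ``all $N$'' (through the vector-bundle base change) and collapses the paired-semistability conclusions of Propositions \ref{prop:delta-orthogonality} and \ref{prop:generic-Picard-semistability} to the single statement about $\shat{E}^0$, and that $\omega_X\otimes(E'\otimes M)\dual=E\otimes M\inv$ holds, since this identity is what encodes the Serre-dual slope flip from $-1$ to $2g-1$.
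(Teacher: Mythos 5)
Your proposal is correct and follows essentially the same route as the paper: realise the statement as the Serre dual of \hyref{Corollary}{cor:res1} by applying \hyref{Lemma}{lem:orth-pair} to the semistable slope $-1$ bundle $\omega_X\otimes E\dual$ twisted by $M$, transport the orthogonality via \hyref{Lemma}{lem:sym}(iii) to $(E\otimes M\dual)\dorth(F\dual\otimes\omega_X)$, and conclude through Propositions \ref{prop:generic-Picard-semistability}/\ref{prop:delta-orthogonality} together with the simplified base change for the locally free $\FM_\kp(E)=\shat{E}^0$. Your write-up is in fact more explicit than the paper's (which compresses the last steps into ``proceed as in the proof of Corollary \ref{cor:res1}''), and you correctly handle the two degenerate points --- the vacuous ``same slope'' clause when $\Fp^1(E\otimes N)=0$ and the upgrade from general to arbitrary $N$.
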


\begin{proof}
The vanishing of $\shat{E}^1$ follows from cohomology and base change, and for the same reason, $\shat{E}^0$ is a vector bundle of the given rank.
As $E$ is semistable, so is its dual $E \dual$. Thus, $E\dual \otimes \omega_X$ is semistable of degree $-1$. By \hyref{Lemma}{lem:orth-pair}, there exists a sheaf $F$ such that $(E\dual \otimes \omega_X \otimes M) \ndorth F$
for any line bundle $M$ of degree 1. By Serre duality,  \hyref{Lemma}{lem:sym}(iii), this implies
 $(E \otimes M\dual) \dorth (F\dual \otimes \omega_X)$.
Now we proceed as in the proof of \hyref{Corollary}{cor:res1}.
\end{proof}

\begin{remark}\label{rem:general1a}
Applying \hyref{Corollary}{cor:res2} to a degree $2g-1$ line bundle $L$, we obtain the semistability of the Picard bundle $P_{2g-1} = \shat{L}^0$. Thus, the above result is a generalisation of Kempf's result \cite{Kempf}.
In fact, Kempf shows the stability of $P_{2g-1}$. The stability follows along the lines of Corollaries~\ref{cor:general1} and \ref{cor:CT1}.
Indeed, if $X$ is not hyperelliptic, and $L \otimes M\otimes \omega_X\dual$ is not effective, then the restriction of $\shat{L}^0$ to $(-X)_M$ is stable.
\end{remark}

\begin{lemma}\label{lem:general2}
If $E$ is a stable vector bundle on $X$ with $\mu(E)<-2$, then $\Fp^1(E)$ is stable.
\end{lemma}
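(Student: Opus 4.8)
My strategy is to reduce the statement about $\Fp^1$ for a stable bundle $E$ with $\mu(E)<-2$ to the already-proved Corollary~\ref{cor:general1} about $\Fm^0$ for a stable bundle of slope $>2g$, using the Serre-duality symmetry recorded in \hyref{Lemma}{lem:sym}(iii). The point is that the two functors $\Fp$ and $\Fm$ are intertwined by the operation $G \mapsto \omega_X\otimes G\dual$, exactly as $\dorth$ and $\ndorth$ are. So the first thing I would do is set $E' \coloneqq \omega_X\otimes E\dual$ and compute its slope: since $\deg(\omega_X) = 2g-2$ and $\mu(E\dual) = -\mu(E) > 2$, we get $\mu(E') = (2g-2) + (-\mu(E)) > 2g$. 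Moreover $E'$ is stable because dualising and twisting by a line bundle both preserve stability. Thus $E'$ is precisely of the shape to which \hyref{Corollary}{cor:general1} applies, yielding that $\Fm^0(E')$ is stable.

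The crux is then to establish a clean duality identity relating $\Fp^1(E)$ to $\Fm^0(\omega_X\otimes E\dual)$. I expect this to take the form
\[
   \Fp^1(E) \cong \omega_X \otimes \Fm^0(\omega_X\otimes E\dual)\dual ,
\]
possibly up to a harmless twist by a fixed line bundle that does not affect stability. To prove it I would apply relative (Grothendieck--Serre) duality for the projection $\pr_2\colon X\times X\to X$, whose relative dualising sheaf is $\pr_1^*\omega_X$, to the defining pushforward $\Fp(E)=\RR\pr_{2*}(\ko_{X\times X}(\Delta)\otimes\pr_1^*E)$. Relative duality converts $\RR\pr_{2*}$ of a complex into the dual of $\RR\pr_{2*}$ of the Serre-dual complex; concretely, $\bigl(\Fp(E)\bigr)\dual\otimes\omega_X$ should come out as $\RR\pr_{2*}$ of $\ko_{X\times X}(-\Delta)\otimes\pr_1^*(\omega_X\otimes E\dual)$, which is $\Fm(\omega_X\otimes E\dual)$ by definition. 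Since $\Fp(E)=\Fp^0(E)\oplus\Fp^1(E)[-1]$ and likewise for $\Fm$, dualising swaps the cohomological degrees, so the degree-$1$ part $\Fp^1(E)$ is matched with the degree-$0$ part $\Fm^0(\omega_X\otimes E\dual)$, as desired.

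Granting the identity, the conclusion is immediate: dualising and tensoring by the line bundle $\omega_X$ both preserve stability, so the stability of $\Fm^0(\omega_X\otimes E\dual)$ established above transfers directly to $\Fp^1(E)$. One small bookkeeping point I would verify along the way is that $\Fp^1(E)$ is genuinely a vector bundle (rather than having torsion) in this slope range, so that ``stable'' is unambiguous and the slope computation behind \hyref{Corollary}{cor:general1} carries over verbatim; the hypothesis $\mu(E)<-2$ is exactly what forces $\Fm^0(\omega_X\otimes E\dual)$ into the $\mu>2g$ regime of that corollary.

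I expect the main obstacle to be the precise form of relative duality on $X\times X$ --- getting the relative dualising sheaf, the shift, and the direction of the dual exactly right so that the degrees match and no spurious twist survives that would invalidate the identification. The stability transfer itself is formal once the isomorphism is pinned down.
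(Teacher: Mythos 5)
Your proposal is correct and, at its core, follows the same reduction as the paper: both identify $\Fp^1(E)$ with the dual of $\Fm^0(E')$ for $E' = \omega_X \otimes E\dual$, observe that $E'$ is stable of slope $2g-2-\mu(E) > 2g$, and invoke Corollary~\ref{cor:general1}. The difference is in how the duality identity is established. The paper argues by hand: tensoring $0 \to \ko_{X\times X} \to \ko_{X\times X}(\Delta) \to \ko_\Delta(\Delta) \to 0$ with $\pr_1^*E$ and pushing down along $\pr_2$ gives (using the vanishing of $\Fp^0(E)$ and $H^0(E)$) the short exact sequence $0 \to E\otimes\omega_X\dual \to H^1(E)\otimes\ko_X \to \Fp^1(E) \to 0$; dualising this sequence of vector bundles and applying classical Serre duality $H^1(E)\dual \cong H^0(E')$ identifies the result with the evaluation sequence $0 \to \Fm^0(E') \to H^0(E')\otimes\ko_X \to E' \to 0$. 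Your route through relative Grothendieck duality for $\pr_2$ works equally well, and it resolves the twist you hedged about: since $\omega_{\pr_2} = \pr_1^*\omega_X$, the dualising sheaf is entirely absorbed into $E'$ in the fibre direction and the duality on the base is against $\ko_X$, so the clean identity is $\Fp^1(E) \cong \Fm^0(\omega_X\otimes E\dual)\dual$ with \emph{no} residual $\omega_X$-twist --- your displayed formula carries a spurious $\omega_X$, but as you correctly note, a line-bundle twist does not affect stability, so the conclusion is unharmed. One item you dismissed as bookkeeping is actually needed for your derived-category step: $\Fp^0(E) = \pr_{2*}(\ko_{X\times X}(\Delta)\otimes\pr_1^*E)$ vanishes because the fibres $E\otimes\ko_X(y)$ are stable of slope $\mu(E)+1 < 0$, so $\Fp(E) = \Fp^1(E)[-1]$ is a shifted vector bundle and its derived dual splits without $\mathsf{Ext}$-corrections, matching $\Fm(E') = \Fm^0(E')$ (here $\Fm^1(E') = H^1(E')\otimes\ko_X = 0$ since $\mu(E') > 2g-2$) degree by degree. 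What each approach buys: the paper's is elementary and self-contained, producing the explicit four-term data directly; yours is more conceptual and would generalise immediately to other kernels, at the cost of invoking duality machinery and the attendant care with shifts and twists.
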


\begin{proof}
Tensoring the short exact sequence
 $0 \to \ko_{X \times X} \to \ko_{X\times X}(\Delta) \to \ko_\Delta(\Delta) \to 0$
on $X \times X$ with $E\boxtimes\ko_X$, and applying $\pr_{2*}$ yields the following short exact sequence on $X$:
\[ 0 \to E \otimes \omega\dual \to H^1(E) \otimes  \ko_X \to \Fp^1(E) \to 0 ,\]
because $\Fm^0(E)=0$ from stability of $E$ with $\mu(E)<-2$.
Dualising this sequence yields
\[ 0 \to \left( \Fp^1(E) \right)\dual \to H^1(E) \dual \otimes  \ko_X \to E \dual \otimes \omega_X \to 0 . \]
Thus, by classical Serre duality, $\Fp^1(E)$ is the dual of $\Fm^0(E')$ for
$E'=E\dual \otimes \omega_X$. However $E'$ is also stable, of slope
 $\mu(E') = 2g-2 -\mu(E) > 2g$.
So by \hyref{Corollary}{cor:general1} the sheaf $\Fm^0(E')$ is stable.
This proves the lemma.
\end{proof}

\begin{corollary}\label{cor:STAB2}
If the vector bundle $E$ on $X$ is (semi)stable of slope $\mu(E)< -1$, then
the Picard sheaf $\shat{E}^1$ is (semi)stable when restricted to any curve $(X)_N$.
In particular, $\shat{E}^1$ is (semi)stable.
\end{corollary}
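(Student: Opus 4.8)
The plan is to mirror the proof of \hyref{Corollary}{cor:STAB1}, exchanging the roles of $\Fm^0$ and $\Fp^1$, and of the curves $(-X)_M$ and $(X)_N$. The key input will be \hyref{Lemma}{lem:general2}, which plays here the part that \hyref{Corollary}{cor:general1} played there.

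First I would check that $\shat{E}^1$ is locally free, so that the restriction formula of \hyref{Proposition}{prop:FM-restriction} is available for \emph{every} curve $(X)_N$, not merely for general $N$. Since $E$ is semistable with $\mu(E) < -1 < 0$, cohomology and base change give $\shat{E}^0 = 0$ (as in the proof of \hyref{Corollary}{cor:res1}), so $\FM_\kp(E) = \shat{E}^1[-1]$ is a shifted vector bundle and tensoring with $\ko_{(X)_N}$ is exact. Consequently $\shat{E}^1|_{(X)_N} \cong \Fp^1(E\otimes N)\otimes\ko_X(-P_0)$ for all $N\in\Pic^{-1}(X)$, and since twisting by the line bundle $\ko_X(-P_0)$ does not affect (semi)stability, it suffices to prove that $\Fp^1(E\otimes N)$ is (semi)stable.

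Next, observe that $E\otimes N$ is (semi)stable with slope $\mu(E\otimes N) = \mu(E) - 1 < -2$, so we are exactly in the range of \hyref{Lemma}{lem:general2}. In the stable case that lemma directly yields the stability of $\Fp^1(E\otimes N)$, hence of $\shat{E}^1|_{(X)_N}$. For the semistable case I would follow the Serre-duality reduction already used inside the proof of \hyref{Lemma}{lem:general2}: writing $V = E\otimes N$, one has $\Fp^1(V) \cong \Fm^0(V\dual\otimes\omega_X)\dual$, and $V\dual\otimes\omega_X$ is semistable of slope $2g-2-\mu(V) > 2g$. By the semistable part of \hyref{Corollary}{cor:general1} (obtained there through a Jordan--H\"older filtration), $\Fm^0(V\dual\otimes\omega_X)$ is semistable, and the dual of a semistable bundle is semistable; this gives the semistability of $\Fp^1(E\otimes N)$, completing the claim about restrictions to the curves $(X)_N$.

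Finally, the ``in particular'' statement is the standard descent of (semi)stability from a curve cutting out the polarisation to the ambient variety, exactly the implication $(2')\implies(3)$ recorded in \hyref{Proposition}{prop:generic-Picard-semistability}: any destabilising subsheaf of $\shat{E}^1$ on $\Pic$ would restrict to a destabilising subsheaf on a general $(X)_N$, contradicting what we have just shown, and in the stable case the same argument upgrades semistability to stability. The step I expect to require the most care is the passage from the stable to the semistable statement, since \hyref{Lemma}{lem:general2} is phrased only for stable bundles: one must verify that the Serre-duality identification $\Fp^1(V)\cong\Fm^0(V\dual\otimes\omega_X)\dual$ is compatible with the Jordan--H\"older filtration and that all factors produce output of the same slope. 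The latter holds because the slope of $\Fp^1$ of a stable bundle depends only on the input slope, so the stable constituents of $V$, all of slope $\mu(E)-1$, yield constituents of $\Fp^1(V)$ of a common slope, and semistability follows.
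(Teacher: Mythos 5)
Your proposal is correct and follows essentially the same route as the paper: the paper's proof is exactly the combination of \hyref{Proposition}{prop:FM-restriction} (with the local-freeness of $\shat{E}^1$, as in \hyref{Corollary}{cor:res1}, upgrading ``general $N$'' to ``any $N$'') and \hyref{Lemma}{lem:general2} for the stable case, with the semistable case handled by the Jordan--H\"older filtration of $E$. Your only divergence is cosmetic: for semistable $E$ you re-run the Serre-duality identification $\Fp^1(V)\dual \cong \Fm^0(V\dual\otimes\omega_X)$ from inside \hyref{Lemma}{lem:general2} (valid for semistable $V$, since its derivation only needs $H^0(V)=0$ and $\Fp^0(V)=0$) and then quote the semistable part of \hyref{Corollary}{cor:general1}, which itself rests on the same Jordan--H\"older argument the paper invokes directly.
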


\begin{proof}
For stability, this follows from \hyref{Proposition}{prop:FM-restriction} and \hyref{Lemma}{lem:general2}.

The claim for semistable $E$ then follows using the Jordan--H\"older filtration of $E$.
\end{proof}

Next, we give examples for how to apply the symmetry property of orthogonality.
As usual, for a rational number $x$ we denote by $\lceil x\rceil$ the round up of $x$.
For any $r,h\in\IN$, we introduce the number used by Popa in \cite[Theorem 5.3]{Popa} for an effective version of \hyref{Faltings' Theorem}{thm:Faltings}:
\[ P(r,h) \coloneqq 2h \left\lceil\frac{h^2r^2+1}{8h}\right\rceil \,. \]

\begin{theorem}[Popa 2001] \label{thm:Popa}
For any semistable vector bundle $G$ of rank $r \cdot h$ and degree $d \cdot h$ with
coprime integers $r$ and $d$, there exists for any $k \geq P(r,h)$ a vector
bundle $F$ of rank $r \cdot k$ such that $H^*(G \otimes F)=0$.
\end{theorem}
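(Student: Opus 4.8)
The plan is to turn the statement into a non-degeneracy property of a theta locus on a moduli space of bundles, use \hyref{Theorem}{thm:Faltings} for the qualitative existence, and extract the threshold $P(r,h)$ from an explicit construction. First I pin down the numerics. By \hyref{Lemma}{lem:numORTH}(i), any $F$ with $H^*(G\otimes F)=0$ satisfies $\mu(F)=g-1-\mu(G)=g-1-d/r$; since $\gcd(r,d)=1$ this forces $r\mid\rank(F)$, so writing $\rank(F)=rk$ pins the degree to $e\coloneqq k\bigl(r(g-1)-d\bigr)$ and gives $\chi(G\otimes F)=0$. For bundles with these invariants the required vanishing is therefore equivalent to the single condition $h^0(G\otimes F)=0$. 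Note also that, by the easy direction of \hyref{Theorem}{thm:Faltings} (which is symmetric in the two factors), any $F$ realising $H^*(G\otimes F)=0$ is automatically semistable, so semistability of the test bundle is a consequence rather than something to arrange.

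Next I reduce to producing one bundle per rank. Fix the moduli space $M$ of semistable bundles with invariants $(rk,e)$, and set $\Theta_G=\{[F]\in M: h^0(G\otimes F)>0\}$; this is closed by semicontinuity, and when it is a proper subset it is a divisor whose degree in the generalised theta system is governed by the factor $h$ in $G=h\cdot(r,d)$. The theorem thus amounts to showing $\Theta_G\neq M$ for every $k\geq P(r,h)$: it suffices to exhibit, for each such $k$, a single bundle $F_0$ of rank exactly $rk$ with $h^0(G\otimes F_0)=0$. Here \hyref{Theorem}{thm:Faltings} already yields $\Theta_G\neq M$ for \emph{some} rank, and since $H^*(G\otimes F')=H^*(G\otimes F'')=0$ implies $H^*(G\otimes(F'\oplus F''))=0$, the attainable ranks (as multiples of $r$) even form a sub-semigroup of $\IN$. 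The entire content of Popa's theorem is the \emph{effective} upgrade: that the good rank is realised for \emph{all} $k\geq P(r,h)$.

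The heart of the matter is constructing $F_0$ and reading off the bound; here I would follow Popa \cite{Popa}. The naive attempt is to fix a stable $W$ of rank $r$ and slope $g-1-d/r$, so that $G\otimes W$ is semistable of slope $g-1$, and take $F_0=\bigoplus_i(W\otimes L_i)$ for general degree-zero $L_i$, whence $h^0(G\otimes F_0)=\sum_i h^0(G\otimes W\otimes L_i)$. This works precisely when $G\otimes W$ has a proper theta divisor in $\Pic$ — but by Raynaud \cite{Raynaud} this can fail, and that failure is exactly why a \emph{rank} bound must appear: one cannot kill sections with line-bundle twists alone. The remedy is to assemble $F_0$ from higher-rank stable blocks of rank $r$ (which exist because $\gcd(r,d)=1$) via sufficiently general iterated extensions, gaining enough moduli that the generic member has no section of $G\otimes F_0$. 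The threshold $P(r,h)=2h\bigl\lceil\frac{h^2r^2+1}{8h}\bigr\rceil$ is the value at which the governing dimension count tips: the number of conditions imposed by annihilating every potential section is outweighed by the dimension of the family of configurations once $k\geq P(r,h)$, and $\bigl\lceil\frac{h^2r^2+1}{8h}\bigr\rceil$ is the optimiser of that count, with the factor $2h$ recording the multiplicity of $\Theta_G$.

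The main obstacle is precisely this effective dimension count: verifying that a sufficiently general bundle of rank \emph{exactly} $rk$ acquires no sections after tensoring with $G$, as opposed to the mere existence supplied by \hyref{Theorem}{thm:Faltings}. Two points demand care. One must keep the rank an exact multiple of $r$ while the degree tracks $e$, so that $\chi(G\otimes F_0)=0$ is preserved at every step of the construction (each extension must respect the forced slope $g-1-d/r$). And one must clear the Raynaud-type obstruction, where for small rank $\Theta_G$ fills all of $M$; this is the sole reason the bound is quadratic in $rh=\rank(G)$ rather than constant. Coprimality of $r$ and $d$ enters twice: to produce the stable rank-$r$ building blocks, and to guarantee that, once non-trivial, $\Theta_G$ is a genuine divisor of the expected multiplicity rather than a degenerate higher-codimension locus.
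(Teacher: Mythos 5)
This statement is not proved in the paper at all: it is quoted verbatim from \cite[Theorem 5.3]{Popa}, so the only benchmark is Popa's own argument. Your preliminary reductions are correct and standard: Riemann--Roch (\hyref{Lemma}{lem:numORTH}(i)) forces $\mu(F)=g-1-d/r$, coprimality of $r$ and $d$ then forces $r\mid\rank(F)$ and pins $\deg(F)=k\bigl(r(g-1)-d\bigr)$, so that $\chi(G\otimes F)=0$ and the vanishing reduces to $h^0(G\otimes F)=0$, i.e.\ to $[F]\notin\Theta_G$ in the moduli space of semistable bundles with those invariants. But this is the trivial part, and past it your proposal asserts rather than proves everything that matters.

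The genuine gap is the effective content itself: you never show that $\Theta_G$ is proper for \emph{every} $k\geq P(r,h)$, and the threshold $2h\lceil (h^2r^2+1)/(8h)\rceil$ is reverse-engineered from the statement --- you invoke a ``governing dimension count'' that ``tips'' at $P(r,h)$ without specifying the parameter space, the conditions being counted, or the inequality being optimised, so the claim that $\lceil (h^2r^2+1)/(8h)\rceil$ is ``the optimiser of that count'' has no mathematical content. Moreover, the mechanism you sketch (a single $F_0$ built from rank-$r$ stable blocks by general iterated extensions) is not how the bound arises in \cite{Popa}: there, $\Theta_G$ is interpreted as a divisor in a generalised theta linear series on the moduli space $U_X(rk,e)$, and the theorem follows from an effective base-point-freeness result for such series, which in turn rests on upper bounds for dimensions of Quot schemes of (semi)stable bundles --- the ``dimension estimates for Hilbert schemes'' of the title. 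Without carrying out those Quot-scheme estimates, or a genuine substitute, your argument only re-proves the qualitative existence, which is already \hyref{Theorem}{thm:Faltings}. Two smaller slips: the sub-semigroup observation cannot rescue effectivity, since a numerical-semigroup argument yields all large $k$ only after exhibiting attainable values of $k$ that are coprime, and in any case gives no bound comparable to $P(r,h)$; and stable bundles of rank $r$ and prescribed degree exist on any curve of genus $\geq 2$ regardless of coprimality --- $\gcd(r,d)=1$ is not what produces your building blocks, though it is what makes $\Theta_G$, once proper, behave as expected.
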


\begin{lemma}\label{lem:general3}
Let $\mu=\frac{d}{r} \in \IQ$ with $\mu\in (g,g+1]$, let $k \geq P(r,g)$ and $R = k \cdot r$. 
Then there exists a vector bundle $F$ on $X$ of slope $\mu$ and rank $R$ such that $\Fm^0(F) \oplus \Fm^1(F)$ is semistable.
\end{lemma}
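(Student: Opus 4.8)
My plan is to reformulate the desired semistability as the existence of a $-\Delta$-orthogonal partner for $F$, and then to manufacture such a partner starting from the \emph{other} side of the orthogonality, where the transform $\Fm$ has only one nonzero cohomology sheaf and Popa's effective Faltings theorem can be applied with exactly the bound $P(r,g)$.

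First I would invoke \hyref{Proposition}{prop:delta-orthogonality}: the statement that $\Fm^0(F)$ and $\Fm^1(F)$ are semistable of the same slope is equivalent to the existence of a sheaf $G\ne 0$ with $F\ndorth G$, and two semistable sheaves of equal slope have a semistable direct sum (the converse also holds). So it suffices to produce, for the prescribed slope $\mu=d/r$ (in lowest terms) and rank $R=kr$, a bundle $F$ together with a $-\Delta$-orthogonal partner $G$. I would build the pair from $G$. By \hyref{Lemma}{lem:numORTH}(ii), an orthogonal partner of a slope-$\mu$ bundle has slope $\nu=g+\tfrac{g}{\mu-g}$, which lies in $[2g,\infty)$ for $\mu\in(g,g+1]$. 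Choose $G$ semistable of rank $s=d-rg$ (note $s\ge 1$ as $\mu>g$) and slope $\nu$; its degree $g(s+r)$ is an integer, so such $G$ exist. Since $\nu\ge 2g>2g-1$, the twist $G(-P)$ is semistable of slope $>2g-2$ for every $P$, whence $\Fm^1(G)=0$ and $W\coloneqq\Fm^0(G)$ is locally free. A fibrewise Riemann--Roch computation, exactly as in the remark following \hyref{Proposition}{prop:delta-orthogonality}, gives $\rank(W)=s(\nu-g)=rg$ and $\deg(W)=-g(s+r)$, and \hyref{Corollary}{cor:general1} shows $W$ is semistable.

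Now I would feed $W$ into \hyref{Theorem}{thm:Popa}. Writing $\rank(W)=r\cdot g$ and $\deg(W)=-(s+r)\cdot g$, the coprimality needed is $\gcd(r,s+r)=\gcd(r,d)=1$, which holds since $\mu=d/r$ is in lowest terms; here $h=g$, so Popa's bound is precisely $P(r,g)$. Hence for $k\ge P(r,g)$ there is a bundle $F$ of rank $rk=R$ with $H^*(W\otimes F)=0$. Because $\Fm^1(G)=0$, \hyref{Lemma}{lem:ortho1}(i) upgrades this to $G\ndorth F$, so $F\ndorth G$ by \hyref{Lemma}{lem:sym}(i), and \hyref{Proposition}{prop:delta-orthogonality} then yields that $\Fm^0(F)$ and $\Fm^1(F)$ are semistable of the same slope, i.e.\ $\Fm^0(F)\oplus\Fm^1(F)$ is semistable. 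One checks the slope is correct: \hyref{Lemma}{lem:numORTH}(i) gives $\mu(F)=-\mu(W)+g-1=\tfrac{\nu}{\nu-g}+g-1=\mu$.

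The hard part is not any single homological input but the numerical bookkeeping that makes Popa's bound match the hypothesis verbatim: the choice $\rank(G)=s=d-rg$ is forced precisely so that $\rank\Fm^0(G)=rg$ and so that $\deg\Fm^0(G)$, after dividing by $g$, is coprime to $r$. I would also flag the endpoint $\mu=g+1$: lowest terms then forces $r=1$, so $G$ is a line bundle of degree $2g$ and $W=\Fm^0(G)$ is the classical Picard bundle, for which semistability is known and the same argument goes through (one cannot invoke \hyref{Corollary}{cor:general1} verbatim there, since its hypothesis is the strict inequality $\mu(G)>2g$).
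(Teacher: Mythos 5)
Your proposal is correct and follows essentially the same route as the paper's own proof: build the orthogonal partner on the high-slope side (the paper's $E$, your $G$, of slope $\mu^- = g + \frac{g}{\mu-g} \in [2g,\infty)$ and rank $d-rg$), apply Popa's theorem with $h=g$ to the semistable bundle $\Fm^0$ of that partner (semistable by \hyref{Corollary}{cor:general1}, with $\Fm^1=0$), and transfer the orthogonality back to $F$ via \hyref{Lemma}{lem:ortho1} and the symmetry \hyref{Lemma}{lem:sym}(i). Your extra checks are sound refinements of the paper's argument: the coprimality $\gcd(r,s+r)=\gcd(r,d)=1$ and the slope verification are left implicit in the paper, your endpoint discussion for $\mu=g+1$ (where $\mu^-=2g$ falls outside the strict hypothesis of \hyref{Corollary}{cor:general1}) mirrors the paper's citation of \hyref{Remark}{rem:general1a} alongside that corollary, and your value $\deg(W)=-g(s+r)=-gd+g^2r-gr$ is the correct one (the paper's printed $\deg(\Fm^0(E))=gr-gd-g^2r$ contains sign typos, which in any case affect neither the divisibility by $g$ nor the coprimality needed for Popa's bound $P(r,g)$).
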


\begin{proof}
We begin with the involution $\IQ\to\IQ, \mu \mapsto \mu^- \coloneqq g+\smash{\frac{g}{\mu-g}}$. It is decreasing on $\IQ_{>g}$.
By \hyref{Lemma}{lem:numORTH}, if $E \ndorth F$, then $\mu(E)=\mu(F)^-$.

Now let $\mu = \frac{d}{r} \in (g,g+1]$, then $\mu^- \in [2g, \infty)$.
Let $E$ be a stable vector bundle on $X$ of rank $d-rg$ and degree $gd-g^2r+rg$, i.e.\
$\mu(E)=\mu^-$. By \hyref{Corollary}{cor:general1} or \hyref{Remark}{rem:general1a}, we have that
$\Fm^0(E)$ is semistable. Since $\mu(E) >2g-1$, we also conclude $\Fm^1(E)=0$.
So we can use the Riemann--Roch formula to compute $\rank(\Fm^0(E))=gr$, and
$\deg(\Fm^0(E))=gr-gd-g^2r$. Popa's result Theorem \ref{thm:Popa} implies that for
any $R=k \cdot r$ with $k$ as above there exists a vector bundle $F$ of rank $R$ such
that $H^*(\Fm^0(E) \otimes F)=0$.

By \hyref{Lemma}{lem:ortho1}, this yields $E \ndorth F$.
Symmetry, i.e.\ \hyref{Lemma}{lem:sym}(i), then gives $F \ndorth  E$.
And so, again by \hyref{Lemma}{lem:ortho1}, it follows that
 $H^*((\Fm^0(F) \oplus \Fm^1(F)) \otimes E)=0$.
This implies the semistability of the direct sum $\Fm^0(F) \oplus \Fm^1(F)$.
\end{proof}

\begin{proposition}\label{prop:STAB3}
Let $\mu=\frac{d}{r} \in \IQ$ with $\mu\in(g-1,g]$, let $k \geq P(r,g)$ and $R=k \cdot r$.
Then there exists a vector bundle $F$ on $X$ of slope $\mu$ and rank $R$ such that $\shat{F}^0$ is semistable. Indeed, the restriction of $\shat{F}^0$ to the general curve $(-X)_M$ is semistable.
\end{proposition}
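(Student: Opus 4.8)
The plan is to deduce this from \hyref{Lemma}{lem:general3} by a single twist, and then feed the resulting orthogonality into the machine \hyref{Proposition}{prop:generic-Picard-semistability}. The key observation is that the slope window $(g-1,g]$ of the proposition is exactly the window $(g,g+1]$ of \hyref{Lemma}{lem:general3} shifted down by one, and that tensoring with a line bundle of degree $1$ realises this shift while preserving both the rank and the coprimality of numerator and denominator of the slope.

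Concretely, I would write $\mu = d/r$ in lowest terms, so that $\gcd(d+r,r)=\gcd(d,r)=1$ and $\mu+1 = (d+r)/r \in (g,g+1]$ is again in lowest terms with the \emph{same} denominator $r$. Then I apply \hyref{Lemma}{lem:general3} to the slope $\mu+1$, with the same $r$, the same $k \geq P(r,g)$, and the same $R = kr$; this produces a vector bundle $G$ of slope $\mu+1$ and rank $R$ such that $\Fm^0(G) \oplus \Fm^1(G)$ is semistable. The next step is to convert this into orthogonality: since a direct sum of semistable sheaves is semistable exactly when the summands share a slope, semistability of $\Fm^0(G)\oplus\Fm^1(G)$ says precisely that $\Fm^0(G)$ and $\Fm^1(G)$ are semistable of the same slope, so \hyref{Proposition}{prop:delta-orthogonality} yields a nonzero sheaf $F''$ with $G \ndorth F''$.

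Now I fix any line bundle $M_0$ of degree $1$ and set $F \coloneqq G \otimes M_0\inv$. Then $F$ has slope $\mu$ and rank $R=kr$, as required, and $F \otimes M_0 = G$ gives $(F \otimes M_0) \ndorth F''$. This is exactly condition~(1) of \hyref{Proposition}{prop:generic-Picard-semistability} for the sheaf $F$ (with the chosen $M_0$ and the nonzero sheaf $F''$). Hence condition~(2) holds: for a general line bundle $M$ of degree $1$, the restrictions $\shat{F}^0|_{(-X)_M}$ and $\shat{F}^1|_{(-X)_M}$ are semistable of the same slope. In particular $\shat{F}^0|_{(-X)_M}$ is semistable for general $M$ (the statement about $\shat{F}^1$ is more than we need here), and by the implication $(2)\implies(3)$ the sheaf $\shat{F}^0$ is semistable on $\Pic$.

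The argument is essentially a bookkeeping corollary of the two cited results, so I do not expect a deep obstacle. The one point that genuinely requires care is the index matching in the reduction step: one must check that twisting by $M_0\inv$ lands us back on a bundle with the \emph{exact} rank $R=kr$ and the \emph{exact} slope $\mu=d/r$ demanded by the statement, which is why writing $\mu$ in lowest terms and observing $\gcd(d+r,r)=\gcd(d,r)$ is essential for invoking \hyref{Lemma}{lem:general3} with the same $P(r,g)$. It is also worth flagging, to avoid a red herring, that $\shat{F}$ is \emph{not} a twist of $\shat{G}$ on $\Pic$ (twisting $F$ on $X$ translates, rather than twists, on the Picard variety); but this never enters, since we never compare the two Picard complexes directly and only use the orthogonality $G \ndorth F''$, reinterpreted as $(F\otimes M_0)\ndorth F''$, as the input to \hyref{Proposition}{prop:generic-Picard-semistability}.
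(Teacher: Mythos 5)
Your proposal is correct and takes essentially the same route as the paper: both proofs apply Lemma~\ref{lem:general3} at the shifted slope $\mu+1$ and set $F = F'\otimes M\dual$ for a degree-one line bundle, the only difference being that the paper reads off $\shat{F}^0|_{(-X)_M} \cong \Fm^0(F')\otimes\ko_X(P_0)$ directly from Proposition~\ref{prop:FM-restriction}, while you route the same information through Proposition~\ref{prop:delta-orthogonality} and the equivalence $(1)\iff(2)$ of Proposition~\ref{prop:generic-Picard-semistability}, which is itself built on that restriction formula. Your coprimality bookkeeping (that $\mu+1=(d+r)/r$ is again in lowest terms, so the same bound $P(r,g)$ and rank $R=kr$ apply) is left implicit in the paper and is a sound clarification, and your route even yields the \emph{general curve} clause slightly more directly, since condition~(2) is exactly that statement.
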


\begin{proof}
It is enough to show the existence of some $F$ such that $\shat{F}^0$ restricted to $(-X)_M$ is semistable. Let $\mu$ and $R$ be as in the
proposition. Let $F'$ be a vector bundle of slope $\mu+1$ and rank $R$ such that $\Fm^0(F) \oplus \Fm^1(F)$ is semistable, which exists by \hyref{Lemma}{lem:general3}. Then $\Fm^0(F')  \otimes \ko_X(P_0)$ is also semistable.

We set $F=F' \otimes M\dual$ for some line bundle $M$ of degree 1. By \hyref{Proposition}{prop:FM-restriction}, the restriction of $\shat{F}^0$ to $(-X)_M$ is the semistable sheaf $\Fm^0(F') \otimes \ko_X(P_0)$.
\end{proof}

\begin{corollary}\label{cor:STAB4}
Let $\mu=\frac{d}{r} \in \IQ$ with $\mu \in [g-2,g-1)$, let $k \geq P(r,g)$ and $R=k \cdot r$. 
Then there exists a vector bundle $F$ on $X$ of slope $\mu$ and rank $R$ such that $\shat{F}^1$ is semistable. Indeed, the restriction of $\shat{F}^1$ to the general curve $(X)_N$ is semistable.
\end{corollary}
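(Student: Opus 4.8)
The plan is to deduce this statement from \hyref{Proposition}{prop:STAB3} by the same Serre-duality device already used to pass from \hyref{Corollary}{cor:res1} to \hyref{Corollary}{cor:res2}, and from \hyref{Corollary}{cor:general1} to \hyref{Lemma}{lem:general2}. The involution $\mu\mapsto 2g-2-\mu$ carries the interval $[g-2,g-1)$ bijectively onto $(g-1,g]$, which is exactly the range covered by \hyref{Proposition}{prop:STAB3}; the relevant Picard sheaf changes from $\shat{(\,\cdot\,)}{}^0$ to $\shat{(\,\cdot\,)}{}^1$ because dualising interchanges the functors $\Fp$ and $\Fm$, hence the two cohomology sheaves of the Picard complex. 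So I expect no genuinely new estimate is needed beyond \hyref{Proposition}{prop:STAB3}.

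Concretely, given $\mu=d/r\in[g-2,g-1)$ with $k\geq P(r,g)$ and $R=kr$, set $\mu'\coloneqq 2g-2-\mu$. Since $\gcd(r,d)=1$ forces $\gcd(r,2gr-2r-d)=1$, we may write $\mu'=d'/r\in(g-1,g]$ in lowest terms with the \emph{same} denominator $r$, so \hyref{Proposition}{prop:STAB3} applies verbatim, with the identical $k\geq P(r,g)$ and $R=kr$, and produces a vector bundle $F'$ of slope $\mu'$ and rank $R$ whose Picard sheaf $\shat{F'}^0$ restricts to a semistable bundle on the general curve $(-X)_M$. I then put $F\coloneqq (F')\dual\otimes\omega_X$, a bundle of rank $R$ and slope $\mu$, which is the candidate of the corollary. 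For a general line bundle $N$ of degree $-1$ I set $M\coloneqq N\dual$, a general line bundle of degree $1$, and claim the identification
\[ \shat{F}^1\big|_{(X)_N} \;\cong\; \bigl(\,\shat{F'}^0\big|_{(-X)_M}\,\bigr)\dual \]
of vector bundles on $X$, under the usual identifications $(X)_N\cong X\cong(-X)_M$. Granting this, the right-hand side is semistable since the dual of a semistable bundle on a curve is semistable; thus $\shat{F}^1|_{(X)_N}$ is semistable for general $N$, and the standard restriction argument from the proof of \hyref{Proposition}{prop:generic-Picard-semistability} (a destabilising subsheaf of $\shat{F}^1$ would induce one on its restriction to the complete-intersection curve $(X)_N$) upgrades this to semistability of $\shat{F}^1$ on $\Pic$.

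The heart of the matter is the displayed isomorphism, and it is the step I expect to require the most care. I would obtain it by combining \hyref{Proposition}{prop:FM-restriction} with relative Serre duality for $\pr_2\colon X\times X\to X$. On the one hand $\shat{F}^1|_{(X)_N}\cong\Fp^1(F\otimes N)\otimes\ko_X(-P_0)$ and $\shat{F'}^0|_{(-X)_M}\cong\Fm^0(F'\otimes M)\otimes\ko_X(P_0)$. On the other hand, relative Serre duality (the sheaf-level incarnation of \hyref{Lemma}{lem:sym}(iii), already exploited in \hyref{Lemma}{lem:general2}) gives $\Fp^1(E)\cong(\Fm^0(E\dual\otimes\omega_X))\dual$; applying it with $E=F\otimes N$ and using the identity $(F\otimes N)\dual\otimes\omega_X=F'\otimes M$ converts $\Fp^1(F\otimes N)$ into $(\Fm^0(F'\otimes M))\dual$, whereupon the two twists $\ko_X(\pm P_0)$ cancel and the claim follows. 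The one subtle point — the main obstacle — is that this passage from the derived duality $\Fm(E\dual\otimes\omega_X)[1]\cong\Fp(E)\dual$ to the individual cohomology sheaves is only clean when the sheaves involved are locally free, since otherwise an $\Ext^1$ (torsion) correction can appear. This is precisely why I work after restricting to a general $N$: exactly as in the proof of \hyref{Proposition}{prop:FM-restriction}, for general $N$ the relevant restrictions are honest vector bundles on $X$, so the derived duality splits into the stated isomorphism of bundles with no torsion term, and the argument closes.
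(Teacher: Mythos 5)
Your overall strategy coincides with the paper's: the proof of Corollary~\ref{cor:STAB4} given there is precisely ``apply Serre duality, as in Lemma~\ref{lem:sym}(iii), to Proposition~\ref{prop:STAB3}'', and your numerics are all correct --- the involution $\mu \mapsto 2g-2-\mu$ maps $[g-2,g-1)$ onto $(g-1,g]$, it preserves the denominator $r$ (so the same $P(r,g)$, $k$ and $R$ apply), and $F=(F')\dual\otimes\omega_X$ has the right slope and rank. However, you implement the duality at the level of cohomology sheaves, via $\Fp^1(E)\cong\bigl(\Fm^0(E\dual\otimes\omega_X)\bigr)\dual$, and there your argument has a genuine gap exactly at the point you flag. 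Grothendieck duality for $\pr_2$ gives unconditionally $\Fm^0(E\dual\otimes\omega_X)\cong\Fp^1(E)\dual$, hence only $\Fp^1(E)^{\vee\vee}\cong\Fm^0(E\dual\otimes\omega_X)\dual$; to remove the double dual you must know that $\Fp^1(F\otimes N)$ is torsion-free, and in this slope range ($\mu(F\otimes N)\in[g-3,g-2)$, where typically both $\shat{F}^0$ and $\shat{F}^1$ are nonzero) none of the vanishings powering Lemma~\ref{lem:general2} are available. Your justification --- that for general $N$ the restrictions are honest vector bundles, ``exactly as in the proof of Proposition~\ref{prop:FM-restriction}'' --- does not deliver this: avoiding the associated components of the cohomology of the two-term complex yields the identification $\shat{F}^1|_{(X)_N}\cong\Fp^1(F\otimes N)\otimes\ko_X(-P_0)$ as sheaves, not its local freeness. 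The curves $(X)_N$ form a covering family whose general member avoids any fixed subset of $\Pic$ of codimension at least $2$ but still meets every divisor; torsion of $\shat{F}^1$ supported on a divisor would therefore survive restriction to the general $(X)_N$, and genericity alone cannot rule this out.

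The repair --- and the paper's actual route --- is to dualise the orthogonality relation rather than the sheaves. The construction behind Proposition~\ref{prop:STAB3} (Lemma~\ref{lem:general3}) produces not merely semistability of $\Fm^0(F'\otimes M)\oplus\Fm^1(F'\otimes M)$ but an actual orthogonal partner: $(F'\otimes M)\ndorth E_0$ for some $E_0\neq 0$. Lemma~\ref{lem:sym}(iii) converts this into $(F\otimes N)\dorth(\omega_X\otimes E_0\dual)$ with $F=(F')\dual\otimes\omega_X$ and $N=M\dual$, which is exactly condition (1') of Proposition~\ref{prop:generic-Picard-semistability}; then $(1')\implies(2')\implies(3)$ gives both claims of the corollary with no sheaf-level duality bookkeeping at all. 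If you insist on your route, the same orthogonality rescues it: by Lemma~\ref{lem:ortho1}, $H^*(\Fm^1(F'\otimes M)\otimes E_0)=0$, which forces $\Fm^1(F'\otimes M)$ to be torsion-free (a torsion subsheaf $T$ would give $0\neq H^0(T\otimes E_0)\subseteq H^0(\Fm^1(F'\otimes M)\otimes E_0)$), and the degree-zero part of the duality identifies the torsion of $\Fp^1(F\otimes N)$ with that of $\Fm^1(F'\otimes M)$. But this input lives in Lemma~\ref{lem:general3} and Proposition~\ref{prop:generic-Picard-semistability}, not in the bare statement of Proposition~\ref{prop:STAB3} that you quote, so your proof as written is incomplete at this step.
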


\begin{proof}
This follows from \hyref{Proposition}{prop:STAB3}, using Serre duality as in \hyref{Lemma}{lem:sym}(iii).
\end{proof}

\section{Clifford's theorem for vector bundles on a curve} \label{sec:clifford}
\noindent
Let us remind the reader that $\mu_{\max }(E)$ denotes the maximum of all
slopes of subbundles of $E$, and $\mu_{\min}(E)$ denotes the minimal slope of
all quotient bundles of $E$.

\begin{proposition} \label{Clifford}
Let $E$ be a vector bundle of rank $r$ and degree $d$ on the smooth projective curve $X$ of genus $g$.
If $\mu_{\max}(E) \leq 2g-2 $ and $\mu_{\min}(E) \geq 0$, then we have   the estimate
\[ h^0(E)-r \leq \frac{d}{2} \, .\]
Moreover, if $\mu_{\max}(E) < 2g-2 $ and $\mu_{\min}(E) > 0$ and $h^0(E)-r = \frac{d}{2}$, then $X$ is hyperelliptic, and the determinant line bundle $\det(E)$ is a multiple of the hyperelliptic line bundle $M$, and $E$ possesses a filtration with graded object ${\rm gr}(E)= \bigoplus_{i=1}^r M^{ \otimes a_i}$ with $0<a_i<g-1$.
\end{proposition}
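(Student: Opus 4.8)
The plan is to reduce the inequality to the already known semistable case via the Harder--Narasimhan filtration, and then to extract the equality case by tracing the resulting chain of inequalities back to its individual pieces. For the inequality itself, let $0 = E_0 \subset E_1 \subset \cdots \subset E_k = E$ be the Harder--Narasimhan filtration, with semistable quotients $Q_i = E_i/E_{i-1}$ of strictly decreasing slopes $\mu_1 > \cdots > \mu_k$. By definition $\mu_1 = \mu_{\max}(E) \le 2g-2$ and $\mu_k = \mu_{\min}(E) \ge 0$, so monotonicity forces every $\mu_i$ into $[0,2g-2]$. Hence the semistable Clifford bound \cite[Theorem~2.1]{BGN} applies to each graded piece and gives $h^0(Q_i) - \rank(Q_i) \le \tfrac12\deg(Q_i)$. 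The long exact cohomology sequences of $0 \to E_{i-1} \to E_i \to Q_i \to 0$ yield the subadditivity $h^0(E) \le \sum_i h^0(Q_i)$, and summing the per-piece bounds, using $r = \sum_i \rank(Q_i)$ and $d = \sum_i \deg(Q_i)$, produces $h^0(E) - r \le d/2$.

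For the equality case I assume the strict hypotheses $\mu_{\max}(E) < 2g-2$ and $\mu_{\min}(E) > 0$ together with $h^0(E) - r = d/2$. Then every $\mu_i$ lies strictly in $(0,2g-2)$, and equality propagates through the whole chain: each intermediate $\le$ must be an equality, so every connecting map $H^0(Q_i) \to H^1(E_{i-1})$ vanishes (the filtration is exact on $H^0$) and each semistable $Q_i$ attains the bound $h^0(Q_i) - \rank(Q_i) = \tfrac12\deg(Q_i)$. Refining the Harder--Narasimhan filtration by Jordan--H\"older filtrations, whose graded pieces are \emph{stable} of the common slope $\mu_i \in (0,2g-2)$, the identical subadditivity argument then forces each stable Jordan--H\"older factor to attain Clifford equality as well. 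It therefore suffices to understand a single stable bundle $S$ of slope $\mu \in (0,2g-2)$ with $h^0(S) - \rank(S) = \tfrac12\deg(S)$, and afterwards to reassemble.

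The crux --- and the step I expect to be the main obstacle --- is to show that such a stable $S$ must have rank $1$, i.e.\ that a stable bundle of rank $\ge 2$ with slope strictly inside $(0,2g-2)$ satisfies the \emph{strict} Clifford inequality. Since $\deg(S) = \mu\cdot\rank(S) > 0$ we have $h^0(S) > 0$, so saturating a nonzero section produces an effective sub-line-bundle $L \subset S$ with $0 \le \deg L < \mu < 2g-2$ (the strict upper bound coming from stability), to which the classical Clifford theorem applies. I would then attempt to feed the forced $H^0$-exactness of $0 \to L \to S \to S/L \to 0$ back against the stability of $S$ to reach a contradiction. The difficulty is precisely that $\mu_{\max}(S/L)$ need not remain $\le 2g-2$, so the quotient leaves the Clifford range and the naive inductive d\'evissage into line bundles breaks down; this is exactly the point at which the rigidity of the classical hyperelliptic equality case must be exploited, e.g.\ through the multiplication map $H^0(L) \otimes H^0(\omega_X \otimes L\dual) \to H^0(\omega_X)$ underlying classical Clifford. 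Once each stable factor is known to be a line bundle of slope $\mu_i$, the classical Clifford equality in the strict range forces $X$ to be hyperelliptic and each factor to be $M^{\otimes a_i}$ with $2a_i = \mu_i$, hence $0 < a_i < g-1$; assembling over the filtration gives ${\rm gr}(E) = \bigoplus_{i=1}^r M^{\otimes a_i}$ and $\det(E) = M^{\otimes \sum_i a_i}$, a power of the hyperelliptic line bundle.
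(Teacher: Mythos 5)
Your proof of the inequality itself is fine: the Harder--Narasimhan reduction plus $h^0$-subadditivity, with the semistable case quoted from \cite[Theorem~2.1]{BGN}, is a legitimate (though citation-based) route, and it coincides with the paper's Case~1; the paper instead proves the semistable case from scratch by induction on the rank. The genuine gap sits exactly where you flagged it: you never prove the crux claim that a stable bundle $S$ of rank $\geq 2$ and slope $\mu(S)\in(0,2g-2)$ satisfies the \emph{strict} inequality $h^0(S)-\mathrm{rk}(S)<\tfrac{1}{2}\deg(S)$. Your reduction of the equality case to stable Jordan--H\"older factors is sound, but then you only describe an attempted attack (saturate a section to get $L\subset S$, play $H^0$-exactness against stability) and correctly observe that it stalls because $\mu_{\max}(S/L)$ may leave the Clifford range. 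Since the whole second half of the proposition --- $X$ hyperelliptic, ${\rm gr}(E)=\bigoplus M^{\otimes a_i}$, $\det(E)$ a power of $M$ --- rests precisely on this rigidity, the proposal does not establish the equality statement.

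The obstacle you ran into has a concrete resolution, and it is the paper's key idea, which applies verbatim in the equality case. For a semistable bundle $E$ with $\mu(E)\leq g-1$, split off a line subbundle $E_1\subset E$ of \emph{maximal} degree $d_1$ (note $d_1\geq 0$ once $h^0(E)>0$); the quotient $E_2$ then \emph{does} stay in the Clifford range: if some $E_3\subset E_2$ of rank $r_3$ had $\mu(E_3)>2g-2$, the kernel $K$ of $E\to E_2/E_3$ would have slope $\mu(K)\geq\frac{r_3}{r_3+1}\,\mu(E_3)>g-1\geq\mu(E)$, contradicting semistability. For $\mu(E)>g-1$, first pass to the Serre dual $E'=E^\vee\otimes\omega_X$, which is semistable of slope $2g-2-\mu(E)<g-1$; by Riemann--Roch and Serre duality, Clifford equality transfers exactly, since $h^0(E')-r=h^1(E)-r=\bigl(rg-\tfrac{d}{2}\bigr)-r=\tfrac{1}{2}\deg(E')$. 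With this dichotomy --- slope at most $g-1$: split off a maximal line subbundle; slope above $g-1$: dualize --- the d\'evissage lands entirely on line bundles, where the classical hyperelliptic rigidity of Clifford's theorem applies, and the equality case follows by induction, tracking $\det(E)\cong\det(E_1)\otimes\det(E_2)$ and noting that Serre duality sends $\det(E)=M^{\otimes a}$ to $M^{\otimes(r(g-1)-a)}$. In particular, stable bundles of rank $\geq 2$ never need to be treated as black boxes, which is the idea missing from your sketch.
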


\begin{proof}
We first prove the inequality $h^0(E)-r \leq \frac{d}{2}$ by induction on $r$.

If $r=1$, then $E=\ko(D)$ is a line bundle associated to a divisor $D$. In this case $d=\mu_{\max}(\ko(D))=\mu_{\min}(\ko(D))$. If $D$ is effective and special, then the claim is precisely the well known theorem of Clifford, see for example \cite[Theorem IV.5.4]{Hartshorne}. If $D$ is not effective, then $h^0(\ko(D))=0$ and the statement is trivial. If $D$ is non-special, then by Riemann--Roch
 $h^0(\ko(D))-1 = \chi(\ko(D))-1 = d-g < \frac{d}{2}$,
the inequality following from $d = \mu_{\max} \leq 2g-2$.

Now suppose that $E$ is of rank $r\geq2$, and the inequality holds for all vector bundles of rank smaller than $r$ which meet the slope conditions. We consider two cases:
\\
Case 1: $E$ is not semistable. Take the subsheaf $E_1$ of $E$ of slope $\mu_{\max}(E)$  and of maximal possible rank.
This $E_1$ is the first sheaf appearing in the Harder--Narasimhan filtration of $E$. We obtain a short exact sequence
\[ 0 \to E_1 \to E \to E_2 \to 0 \, .\]
We have
 $\mu_{\max}(E_1)=\mu_{\min}(E_1)=\mu_{\max}(E)$, $\mu_{\max}(E) > \mu_{\max}(E_2)$, and $\mu_{\min}(E)=\mu_{\min}(E_2)$.
In particular, we see that the induction hypothesis applies to the vector bundles $E_1$ and $E_2$. Hence
 $h^0(E_i) -\rank(E_i) \leq \frac{1}{2}\deg(E_i)$
for $i\in \{ 1,2\}$. Taking global sections of the above short exact sequence, we conclude
 $h^0(E) \leq h^0(E_1)+h^0(E_2)$.
So we get
\[ \begin{array}{rclcl}
        h^0(E)-r & \leq &  h^0(E_1)+h^0(E_2)-r
& = &   \left( h^0(E_1) -\rank(E_1) \right) + (h^0(E_2) -\rank(E_2)) \\
&\leq&  \frac{1}{2} \deg(E_1) + \frac{1}{2} \deg(E_2) &=& \frac{1}{2} \deg(E) .
\end{array} \]
Case 2: $E$ is semistable.
Again, we distinguish two cases, by inspecting the slope of $E$.
\\
Case 2.1: $\mu(E) \leq g-1$.
We may assume $h^0(E)>0$. Let $E_1$ be a line subbundle of $E$ of maximal possible degree $d_1$. From $h^0(E)>0$ we conclude that $d_1 \geq 0$. We obtain a short exact sequence
\[ 0 \to E_1 \to E \to E_2 \to 0 \, .\]
Since any quotient of $E_2$ is also a quotient of $E$ we conclude $\mu_{\min}(E_2) \geq \mu_{\min}(E) \geq 0$. We want to show that $\mu_{\max}(E_2) \leq 2g-2$.
Assume the contrary. Then we have a subsheaf $E_3 \subset E_2$ of rank $r_3$ and slope $\mu(E_3) > 2g-2$. The kernel $K$ of the composition of surjections
\[ E \to E_2 \to E_2/E_3 \]
is of rank $r_3+1$ and of slope
 $\mu(K) = \frac{d_1+\mu(E_3)\cdot r_3}{r_3+1}
      \geq \frac{\mu(E_3)\cdot r_3}{r_3+1}
         = \frac{\mu(E_3)}{1+1/r_3}
         > \frac{2g-2    }{1+1/r_3}
      \geq g-1$.
This contradicts the semistability of $E$. Thus, for both sheaves $E_1$ and $E_2$ the induction hypothesis applies, and we can proceed like in Case 1.
\\
Case 2.2: $\mu(E_1) > g-1$.
The Serre dual bundle $E'=E\dual \otimes \omega_X$ has slope
 $\mu(E') = 2g-2 -\mu(E) < g-1$.
Therefore, as we have seen in Case 2.1
\[ h^0(E') \leq \frac{\deg(E')}{2} = \frac{\rank(E)(2g-2)-\deg(E)}{2} \, .\]
By Serre duality $h^0(E') = h^1(E)$. So when adding the Riemann--Roch formula
 $h^0(E)-h^1(E) = \deg(E)+\rank(E)(1-g)$
to the above inequality, we obtain the stated inequality.

The statement for the case $h^0(E)-r = \frac{d}{2}$ follows along the same lines. Indeed, we must have this equality for $E_1$ and $E_2$ and can proceed by induction since
 $\det(E) \cong \det(E_1) \otimes \det(E_2)$.
The passage from $E$ to the Serre dual $E'=E\dual \otimes \omega_X$ sends a vector bundle $E$ with $\det(E)=M^{\otimes a}$ to a bundle with 
 $\det(E')=M^{\otimes(r(g-1)-a)}$
where $M$ denotes the hyperelliptic line bundle.
\end{proof}

\begin{corollary}\label{cor:CT1}
Let $E$ be a stable vector bundle of slope $\mu = \mu(E) > 2g$.
For any globally generated subsheaf $E' \subsetneq E$ which is not a trivial bundle, 
we have the strict inequality
\[  h^0(E')-\rank(E') < \Big( 1 - \frac{g}{\mu}\Big) \deg(E') . \]
\end{corollary}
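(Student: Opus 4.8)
The plan is to reduce everything to the generalised Clifford theorem (Proposition \ref{Clifford}) by cutting $E'$ into its Harder--Narasimhan pieces. First I record the two structural facts I will use. Since $E'$ is a subsheaf of the locally free $E$ on a smooth curve, it is itself a vector bundle; and since it is globally generated, every quotient bundle of $E'$ is globally generated, hence of non-negative degree, so $\mu_{\min}(E') \ge 0$. On the other hand, the maximal destabilising subbundle of $E'$ is a proper subsheaf of the stable bundle $E$, so its slope, namely $\mu_{\max}(E')$, is strictly less than $\mu$.

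Next, take the Harder--Narasimhan filtration $0 = F_0 \subset \cdots \subset F_k = E'$ with semistable quotients $G_i = F_i/F_{i-1}$ of rank $r_i$ and degree $d_i$, so that $\mu > \mu(G_1) > \cdots > \mu(G_k) \ge 0$ and all $d_i \ge 0$. The exact sequences of the filtration give $h^0(E') \le \sum_i h^0(G_i)$, whence $h^0(E') - \rank(E') \le \sum_i \bigl( h^0(G_i) - r_i \bigr)$. I then estimate each summand according to the slope $\mu(G_i)$. If $\mu(G_i) > 2g-2$, then $G_i\dual \otimes \omega_X$ is semistable of negative slope, so $h^1(G_i)=0$ by Serre duality and Riemann--Roch yields $h^0(G_i) - r_i = d_i - g\,r_i$. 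If instead $0 \le \mu(G_i) \le 2g-2$, then $G_i$ satisfies $\mu_{\max}(G_i) = \mu_{\min}(G_i) = \mu(G_i) \in [0,2g-2]$, and Proposition \ref{Clifford} gives $h^0(G_i) - r_i \le d_i/2$.

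Now I compare each contribution with $(1 - g/\mu)\,d_i$. In the first case the desired inequality $d_i - g\,r_i \le (1-g/\mu)\,d_i$ is equivalent to $\mu(G_i) \le \mu$, which holds strictly since $\mu(G_i) < \mu$ (here I use $g \ge 1$). In the second case $d_i/2 \le (1-g/\mu)\,d_i$ is equivalent to $\mu \ge 2g$, which holds strictly whenever $d_i > 0$ because $\mu > 2g$. Summing over $i$ gives $h^0(E') - \rank(E') \le (1-g/\mu)\deg(E')$, and this is strict unless every summand is an equality, i.e.\ unless there is no piece of slope $>2g-2$ and every $d_i = 0$.

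Finally I rule out that degenerate case, which is where the hypothesis that $E'$ is not trivial enters, and which I regard as the only real subtlety (the rest being bookkeeping of strict versus weak inequalities). Having no piece of slope $>2g-2$ together with all $d_i=0$ forces a single Harder--Narasimhan piece of slope $0$, so $E'$ is semistable of degree $0$. The point is the elementary lemma that a globally generated vector bundle of degree $0$ is trivial: choosing $\rank(E')$ general global sections produces a map $\ko_X^{\oplus \rank(E')} \to E'$ that is generically an isomorphism, whose cokernel is torsion of length $\deg(E') = 0$, hence zero, so $E' \cong \ko_X^{\oplus \rank(E')}$. This contradicts the assumption that $E'$ is not trivial, and therefore the inequality is strict, as claimed.
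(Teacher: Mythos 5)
Your proof is correct, and at its core it is the same argument as the paper's: split $E'$ along its Harder--Narasimhan filtration, compute the high-slope part by Riemann--Roch using $h^1=0$, control the low-slope part by the generalised Clifford bound of Proposition~\ref{Clifford}, and extract strictness from $\mu_{\max}(E')<\mu$ (stability of $E$) together with $\mu>2g$. There are two genuine differences. First, the paper makes a single cut $0\to E_1'\to E'\to E_2'\to 0$ with $\mu_{\min}(E_1')>2g-2\geq \mu_{\max}(E_2')$ and applies Proposition~\ref{Clifford} in its full non-semistable strength to $E_2'$; you decompose all the way into the semistable graded pieces $G_i$, so you only ever need the semistable case of the proposition --- a slightly more elementary input at the cost of a little more bookkeeping. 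Second, and more substantively, your handling of the equality case is tighter than the paper's. The paper derives strictness of the $E_2'$-term from the claim that $\mu_{\min}(E')>0$ because $E'$ is not trivial; that implication is not literally valid --- a globally generated bundle can be non-trivial and still admit a trivial, hence degree-$0$, quotient, e.g.\ $E'=\ko_X\oplus L$ with $L$ globally generated of positive degree, or a suitable non-split extension of $\ko_X$ by such an $L$ --- and in those cases the overall strict inequality survives only through the $E_1'$-term. You instead track equality piece by piece, observe that total equality would force $E'$ to be semistable of degree $0$, and eliminate that case by the (correct) lemma that a globally generated degree-$0$ bundle is trivial; this is exactly the right repair, and your proof is watertight at the one point where the paper's justification has a small slip. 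One caveat you share with the paper: strictness in the high-slope case needs $g\geq 1$ (you flag this explicitly; the paper needs it for $x\mapsto 1-g/x$ to be strictly increasing), and for $g=0$ the statement indeed degenerates to an equality --- harmless in context, since the whole paper concerns Picard varieties of curves of positive genus.
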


\begin{proof}
As $E' \subset E$ is globally generated, we have $\mu_{\min}(E') \geq 0$.
For one sheaf $E_1'$ in the Harder--Narasimhan filtration of $E'$, we have $\mu_{\min}(E_1')>2g-2$, and $\mu_{\max}(E'/E_1') \leq 2g-2$.
We set $E_2'=E'/E_1'$. Now $E_1'$ is semistable with $\mu_{\min}(E_1')>2g-2$, hence $h^1(E_1')=0$. So
\[ h^0(E_1')-\rank(E_1') = \chi(E_1') - \rank(E_1')
                         = \deg(E_1') - g\cdot\rank(E_1')
                         = \Big( 1 - \frac{g}{\mu(E_1')} \Big) \deg(E_1') \]
by Riemann--Roch. Since $\mu(E_1') < \mu$, and the function $x \mapsto 1-\frac{g}{x}$ is strictly increasing for $x>0$, we deduce the inequality
\[ h^0(E_1')-\rank(E_1') < \Big( 1- \frac{g}{\mu} \Big) \deg(E_1') . \]
The sheaf $E_2'$ satisfies the assumptions of \hyref{Proposition}{Clifford}. Moreover, $E'_2$ is itself a globally generated sheaf, and we have
 $\mu_{\min}(E_2') \geq \mu_{\min}(E') > 0$,
the latter inequality from the assumption that $E'$ is not a trivial bundle. Hence $\deg(E_2')>0$ and so we have 
\[ h^0(E_2')-\rank(E_2') \leq \frac{1}{2} \deg(E_2') < \Big( 1 - \frac{g}{\mu} \Big) \deg(E_2') . \]
This last inequality holds, because $\mu > 2g$ implies $\frac{1}{2} < 1-g/\mu$. Adding the two inequalities for $h^0(E_i')-\rank(E_i')$ for $i =1,2$, we obtain the statement of the corollary.
\end{proof}

\bigskip
\noindent
Email:   \texttt{georg.hein@uni-due.de, ploog@math.uni-hannover.de}
\end{document}